\def \eff {{\text{eff}}}
\def \short {{\text{short}}}
\def \tall  {{\text{tall}}}
\DeclareMathOperator \image {image}
\def \t    {{\mathfrak t}}
\def \ft    {{\mathfrak t}}
\def \h    {{\mathfrak h}}
\def \mubar    {\ol{\mu}}
\def \ol   {\overline}
\def \del  {\partial} 
\def \ssminus {\smallsetminus}
\def \Z    {{\mathbb Z}}
\def \R    {{\mathbb R}}
\def \C    {{\mathbb C}}
\def \P    {{\mathbb P}}
\def \CP    {{\mathbb C}{\mathbb P}}
\def \actson {\  \rotatebox[origin=c]{-90}{$\circlearrowright$}\  }
\DeclareMathOperator \interior {interior}
\DeclareMathOperator \SU {SU}
\DeclareMathOperator \SO {SO}
\newif\ifdebug                                                      %
\newcommand{\printname}[1] {\ifdebug 
         \smash{\makebox[0pt]{\hspace{-1.0in}\raisebox{8pt}{\tiny #1}}} \fi}
\newcommand{\labell}[1] {\label{#1}\printname{#1}}
\numberwithin{equation}{section}
\newtheorem {Theorem}[equation]                   {Theorem}
\newtheorem*{Theorem*}                   {Theorem}
\newtheorem {Lemma}[equation]           {Lemma}
\newtheorem {Corollary} [equation]      {Corollary}
\newtheorem* {Corollary*}               {Corollary}
\newtheorem {Proposition} [equation]    {Proposition}
\newtheorem* {Lemma*}                    {Lemma}
\newtheorem* {Assumption*} {Assumption}
\newtheorem* {GlobalAssumptions*} {Global Assumptions}
\theoremstyle{definition}
\theoremstyle{remark}
\newtheorem{Remark}[equation]{Remark}
\newtheorem*{Remark*}{Remark}
\begin{document}

\title{Topology of complexity one quotients}

\author{Yael Karshon}
\address{Department of Mathematics, University of Toronto, 
40 St.\ George Street, 6th floor,
Toronto Ontario M5S 2E4, Canada}
\email{karshon@math.toronto.edu}

\author[Susan Tolman]{Susan Tolman}
\address{Department of Mathematics, 
University of Illinois at Urbana-Champaign, Urbana, IL 61801}
\email{stolman@math.uiuc.edu}

\begin{abstract}
We describe of the topology of the geometric quotients
of $2n$ dimensional compact connected symplectic manifolds
with $n-1$ dimensional torus actions.
When the isotropy weights at each fixed point are in general position,
the quotient is homeomorphic to a sphere.
\end{abstract}

\maketitle

\section{Introduction}

This paper is a byproduct of our work on the classification 
of complexity one Hamiltonian torus actions
\cite{karshon:periodic,KT:locun,KT:globun,KT:globex,KT:short},
but, in fact, it relies only on elementary aspects of such actions.
It is motivated by a number of recent works by toric topologists
(specifically, the papers
\cite{Buchstaber-Terzic:Gr-CP5,Buchstaber-Terzic:2n-k,
Buchstaber-Terzic:Gr,Ayzenberg:local} by Buchstaber and Terzic
and by Ayzenberg)
that explore the topology of the geometric quotients 
of manifolds with certain torus actions. 
Our purpose in this paper is to highlight topological aspects 
of related works in equivariant symplectic geometry
and to illustrate how equivariant symplectic methods 
reproduce some of the recent results in toric topology
and yield new examples.

Similar results were recently obtained by Hendrik S\"uss \cite{Suss}
from the point of view of algebraic geometry.

The examples  Buchstaber and Terzic studied include the quotient of the 
Grassmannian 
of complex 2-planes in $\C^4$ by its standard torus action, which
they showed is homeomorphic to a five dimensional sphere,
and the quotient of the manifold of complete flags in $\C^3$
by its standard torus action, which they showed is  homeomorphic to a four dimensional sphere.
We exhibit these examples as special cases of a more general phenomenon:
for any Hamiltonian action of a torus $T$ 
on a compact symplectic manifold $M$, 
if the reduced spaces over the interior of the momentum polytope 
are two dimensional and those over the boundary are single points---this
condition holds if and only if the dimension of the torus
is one less than the dimension of the manifold
and at each fixed point the isotropy weights are in general position---then
the geometric quotient $M/T$ is homeomorphic to a sphere.

\subsection*{Acknowledgement}
This work
is partially funded by the Natural Sciences and Engineering 
Research Council of Canada.
We are grateful to Svjetlana Terzic, Nikita Klemyatin,
and Anton Ayzenberg for helpful discussions.
We are grateful to Hendrik S\"uss for useful comments on our draft.
We wish Victor Buchstaber a happy birthday.

\section{Background and main result}
\labell{sec:background}

Let $T$ be a torus and $\t^*$ the dual to its Lie algebra.

Let $(M,\omega)$ be a symplectic manifold with a $T$ action
and with a momentum map ${\mu \colon M \to \t^*}$.
Such an action is called~\textbf{Hamiltonian}.
We recall the definitions and properties
of Hamiltonian torus actions in Appendix~\ref{app:Hamiltonian actions}.
In particular, 
the momentum map $\mu$ is constant on $T$ orbits,
so it induces a map,
which is sometimes called the \textbf{orbital momentum map},
on the geometric quotient,
$$ \mubar \colon M/T \to \t^*.$$
In this paper we always assume that $M$ is compact\footnote
{
Many of the results in this paper remain true
when $M$ is not necessarily compact
but $\mu$ is proper as a map to some convex subset of $\t^*$.
}
and connected.
Since $M$ is compact, the fixed set $M^T$ is not empty.
To see this, fix a  vector $\xi \in \ft$ that generates a dense one-parameter 
subgroup.
Any point $p \in M$ on which the function
$\left < \mu(\cdot), \xi \right> \colon M \to \R$ 
achieves its minimal value is a fixed point for the one-parameter subgroup,
and hence for $T$.

\medskip\noindent\textbf{Local normal form and the convexity package.}

The local structure of a Hamiltonian torus action is governed
by the local normal form, which describes a neighbourhood of an orbit
up to an equivariant symplectomorphism that preserves momentum maps.
We recall the statement of the local normal form in an appendix.

We denote 
$$ \mathbf{\Delta} := \image{ \mu}. $$
We will need the following theorem and corollary.

\begin{Theorem}[\textbf{Convexity package}]
$\Delta$ is a rational\footnote{
``Rational'' means that the facets have rational conormal vectors.}
convex polytope, 
and the map ${\mu \colon M \to \Delta}$ is open and has connected fibres.
\end{Theorem}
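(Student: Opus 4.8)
The plan is to recognize this as the Atiyah--Guillemin--Sternberg convexity theorem together with the connectedness-of-fibres theorem, and to organize everything around Morse--Bott theory of the linear functionals $\mu^\xi := \langle \mu, \xi \rangle \colon M \to \R$ for $\xi \in \ft$. The three assertions --- that $\Delta$ is a convex polytope, that the fibres of $\mu$ are connected, and that $\mu \colon M \to \Delta$ is open --- are precisely what the ``local--global convexity principle'' (Condevaux--Dazord--Molino; Hilgert--Neeb--Plank; Bjorndahl--Karshon) delivers once one checks two local conditions: that $\mu$ is locally fibre-connected, and that near every point of $\Delta$ the image is a convex cone. So I would first settle the case $\dim T = 1$ directly, then verify those local conditions from the local normal form and feed them into the principle, and finally read off rationality.

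For $\dim T = 1$: here $\mu$ is a single function, and the local normal form shows that near a fixed point $p$ there are complex coordinates in which $\mu = \mu(p) + \frac{1}{2} \sum_j \langle \alpha_j, \cdot \rangle \, |z_j|^2$, where the $\alpha_j \in \ft^*$ are the isotropy weights at $p$. Choosing the generator $\xi$ of $\ft$ so that $\langle \alpha_j, \xi \rangle \ne 0$ for every weight at every fixed point --- possible since $M^T$ has finitely many components and hence only finitely many weights arise --- makes $\mu^\xi$ a Morse--Bott function whose critical set is $M^T$ and whose index and coindex along each critical component are even. By the standard Morse-theoretic lemma (due to Atiyah) that a Morse--Bott function on a compact connected manifold with no critical submanifold of index or coindex $1$ has connected level sets and a unique local minimum value and local maximum value, the fibres of $\mu$ are connected and $\Delta = [\min \mu^\xi, \max \mu^\xi]$ is a closed interval whose endpoints are attained on $M^T$; openness of $\mu \colon M \to \Delta$ follows from the submersion property off $M^T$ together with the displayed local model along $M^T$.

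For the general case I would verify the local hypotheses of the principle. Both local fibre-connectedness and the local cone structure come from the local normal form at a point of a given orbit: a neighbourhood of that orbit is modelled equivariantly on an associated bundle, and on that model $\mu$ is an explicit map whose fibres are locally connected and whose image is a translate of the convex cone generated by the relevant isotropy weights. The principle then gives that $\Delta$ is convex and that $\mu \colon M \to \Delta$ is open with connected fibres. That $\Delta$ is a \emph{polytope} follows because it is compact and equals the convex hull of the finite set $\mu(M^T)$: every extreme point of $\Delta$ lies in $\mu(M^T)$, as one sees by looking at directions $\xi$ along which $\mu^\xi$ is maximized, and $\mu$ is constant on each component of $M^T$ because $d\mu$ vanishes there. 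Finally $\Delta$ is rational because the isotropy weights lie in the weight lattice, so the supporting cones --- hence the facet conormals --- are rational.

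The hard part will be the connectedness of the fibres of the \emph{full} momentum map, i.e.\ the local-fibre-connectedness input (or, in Atiyah's original inductive proof, the passage from the momentum map $\mu'$ of a codimension-one subtorus to the full $\mu$, whose remaining component is a circle momentum map $\nu$): one has to control $\nu$ restricted to a fibre of $\mu'$, and such a fibre need not be a manifold. The cleanest route is to prove the statement first over the regular values of $\mu'$, where the fibre is a manifold, $\nu$ descends to a circle momentum map on the symplectic quotient, and the $\dim T = 1$ case applies, and then to extend to all values by a limiting argument using compactness of $M$; alternatively one reads off from the local normal form directly that $\nu$ restricted to such a fibre still has connected level, sub-level and super-level sets. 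Everything else --- evenness of the Morse--Bott indices, finiteness of $M^T$, integrality of the weights --- is routine bookkeeping.
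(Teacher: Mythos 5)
The paper does not prove this statement at all: the convexity package is quoted as a known background theorem, with the proof deferred to the cited literature (Atiyah, Guillemin--Sternberg, Condevaux--Dazord--Molino, Hilgert--Neeb--Plank, Lerman--Meinrenken--Tolman--Woodward, Bjorndahl--Karshon, Birtea--Ortega--Ratiu); the only thing the paper argues itself is the easy Corollary about preimages of convex sets. So there is no ``paper's proof'' to compare against, and the right question is whether your outline is a faithful account of how the cited sources establish the result. It is. The two routes you describe --- Atiyah's original induction on $\dim T$ via Morse--Bott theory of the components $\mu^\xi$ (even indices and coindices, hence connected level sets and a unique local max/min value), and the local--global convexity principle fed by the local normal form (local fibre-connectedness plus local convexity of the image near each orbit) --- are exactly the two standard proofs, and the second is the one most of the paper's listed references develop; it also delivers openness of $\mu \colon M \to \Delta$ most cleanly, which Atiyah's original argument does not state explicitly. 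Your identification of $\Delta$ as the convex hull of the finite set $\mu(M^T)$ and the rationality of facet conormals from integrality of the isotropy weights are both correct.

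That said, what you have written is an outline, not a proof: the one genuinely hard step --- connectedness of the fibres of the full momentum map, equivalently the inductive passage from a codimension-one subtorus, where the fibre of $\mu'$ over a singular value is not a manifold --- is only gestured at (``a limiting argument using compactness,'' or ``one reads off from the local normal form directly''). You flag this honestly, but to count as a proof you would have to either carry out that argument or state precisely the version of the local--global principle you are invoking (continuity/properness plus local fibre-connectedness plus local convexity implies global convexity, openness onto the image, and connected fibres) and verify its hypotheses from the explicit formula for $\mu_Y$ on the model $T \times_H (\h^0 \times \C^\ell)$. Given that the paper itself treats the theorem as citable background, your sketch is an appropriate and accurate substitute for a citation, but not a self-contained replacement for the references.
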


\begin{Corollary} \labell{corollary of convexity}
For any convex subset $C$ of $\t^*$, 
the preimage $\mu^{-1}(C)$ is connected.
\end{Corollary}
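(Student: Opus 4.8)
The plan is to prove the Corollary in three steps, reducing an arbitrary convex set to a line segment. First I would show that $\mu^{-1}(U)$ is connected whenever $U \subseteq \t^*$ is open and convex with $U \cap \Delta \ne \emptyset$. If it were not, write $\mu^{-1}(U) = A \sqcup B$ with $A$ and $B$ nonempty and relatively open; since $\mu^{-1}(U)$ is open in $M$, both $A$ and $B$ are open in $M$. By the Convexity package the fibres of $\mu$ are connected, so each fibre over a point of $U \cap \Delta$ lies entirely in $A$ or entirely in $B$; hence $U \cap \Delta = \mu(A) \sqcup \mu(B)$, the union being disjoint because a common point of the two images would produce a fibre meeting both $A$ and $B$. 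Since $\mu \colon M \to \Delta$ is open, $\mu(A)$ and $\mu(B)$ are open in $\Delta$, hence---being contained in $U \cap \Delta$---open in $U \cap \Delta$; they are nonempty, contradicting the connectedness of the convex set $U \cap \Delta$.

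Next I would prove, for such a $U$, the identity $\mu^{-1}(\overline U) = \overline{\mu^{-1}(U)}$, which shows that $\mu^{-1}(\overline U)$ is connected, being the closure of the connected set $\mu^{-1}(U)$. The inclusion $\supseteq$ is immediate. For the reverse, let $p$ satisfy $c := \mu(p) \in \overline U \cap \Delta$ and pick a point $z \in U$ in the relative interior of $\Delta$---possible because $U \cap \Delta$ is a nonempty subset of $\Delta$ that is open in $\Delta$. Then the half-open segment $[z, c)$ lies in $U \cap \Delta$, so there are $c_k \in U \cap \Delta$ with $c_k \to c$; using that $\mu$ is open (and that $M$ is metrizable) I can lift these to $p_k \to p$ with $\mu(p_k) = c_k$, whence $p \in \overline{\mu^{-1}(U)}$.

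Finally, for an arbitrary convex $C$ it suffices to show that any two points $p_0, p_1 \in \mu^{-1}(C)$ lie in a common connected subset. Put $a_i = \mu(p_i) \in C \cap \Delta$; then $\sigma := [a_0, a_1]$ is a compact convex subset of $C$, and also of $\Delta$. Taking $U_n$ to be the open $\tfrac1n$-neighbourhood of $\sigma$, each $U_n$ is open and convex, one has $\overline{U_{n+1}} \subseteq U_n$ and $\bigcap_n \overline{U_n} = \sigma$, so $\mu^{-1}(\sigma) = \bigcap_n \mu^{-1}(\overline{U_n})$. By the previous step each $\mu^{-1}(\overline{U_n})$ is a compact connected subset of $M$, and these sets are nested; since a nested intersection of nonempty compact connected subsets of a Hausdorff space is connected, $\mu^{-1}(\sigma)$ is connected. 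As $p_0, p_1 \in \mu^{-1}(\sigma) \subseteq \mu^{-1}(C)$, they lie in one connected component of $\mu^{-1}(C)$, and the Corollary follows.

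The step I expect to require the most care is accommodating convex sets that are neither open nor closed: the Convexity package supplies only the openness of $\mu$ over $\Delta$, so the clean argument of the first step covers open convex sets, but the preimage of a closed---let alone a half-open---convex set is not directly accessible. The reduction in the last step is what sidesteps this, pushing all the topology onto the preimage of a compact segment, where the outside approximation by open convex neighbourhoods applies and the closure identity $\mu^{-1}(\overline U) = \overline{\mu^{-1}(U)}$---the one further place where openness of $\mu$ is used---carries connectedness from the open sets to their closures.
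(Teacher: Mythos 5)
Your proof is correct, but it takes a longer route than the paper, which disposes of the statement in one line: $C\cap\Delta$ is convex, hence connected, and the preimage of any connected subset of the image under a continuous open map with connected fibres is connected. The point you flag as the main difficulty --- that openness of $\mu\colon M\to\Delta$ only gives you direct access to open convex sets --- is in fact not an obstruction: for \emph{any} subset $S\subseteq\Delta$, the restriction $\mu\colon\mu^{-1}(S)\to S$ is still an open map, because if $A=\mu^{-1}(S)\cap A'$ with $A'$ open in $M$, then $\mu(A)=S\cap\mu(A')$ (if $y\in S\cap\mu(A')$, any $x\in A'$ with $\mu(x)=y$ automatically lies in $\mu^{-1}(S)$, hence in $A$), which is open in $S$. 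With that observation, your Step~1 argument runs verbatim with $U\cap\Delta$ replaced by the arbitrary connected set $C\cap\Delta$, and Steps~2 and~3 become unnecessary. What your detour costs is the extra hypotheses it invokes --- compactness of $M$ (to make $\mu^{-1}(\overline{U_n})$ compact for the nested-intersection argument) and metrizability (to lift the sequence $c_k\to c$) --- neither of which the paper's argument needs; the paper's version therefore survives unchanged in the noncompact-but-proper setting mentioned in its footnote. On the other hand, everything you wrote is correct: the fibrewise dichotomy in Step~1, the identity $\mu^{-1}(\overline{U})=\overline{\mu^{-1}(U)}$ via the segment $[z,c)\subseteq U\cap\Delta$, and the reduction of a general convex $C$ to the compact segment $[a_0,a_1]$ approximated by open convex neighbourhoods are all sound, so this is a valid (if heavier) alternative proof.
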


The local normal form is due to Guillemin-Sternberg~\cite{GS1984}
and Marle~\cite{marle1985}.

The convexity package is due to Guillemin-Sternberg and Atiyah.
Relevant references include the papers~\cite{
GS:convexity, Atiyah:convexity, CDM, lerman-tolman, HNP, LMTW,
Bjorndahl-Karshon, BOR1, BOR2}.
The corollary follows from the 
(convexity of $C$ and $\Delta$, hence) connectedness of $C \cap \Delta$
by the following exercise in point set topology:
Given an continuous open map with connected fibres, 
the preimage of any connected subset of the image is connected.

\medskip\noindent\textbf{Principal orbit types over faces 
and in level sets; the complexity}

We continue to assume that $M$ is compact and connected.
Let $T_\eff$ be the quotient of $T$ by the kernel of the action.
Because $M$ is connected, it has a connected open dense subset
where the action of $T_\eff$ is free.
The formula for the momentum map implies that
the affine span of the momentum image of $M$ 
is a translation of the annihilator in $\t^*$
of the Lie algebra of the kernel of the action.  In particular,
\begin{equation} \labell{dim F}
 \dim T_\eff = \dim \Delta .
\end{equation}

The action is \textbf{toric} if $\dim T_\eff = \frac{1}{2} \dim M$.
More generally, 
the \textbf{complexity} of the action is 
$\frac{1}{2} \dim M - \dim T_\eff$;
it measures how far the action is from being toric.

\begin{Lemma} \labell{preimage of face}
For every face\footnote{
Because the convex set $\Delta$ is locally polyhedral,
a subset $F$ of $\Delta$ is a face
if and only if it is equal either to $\Delta$
or to the intersection of $\Delta$ with a supporting hyperplane
(a hyperplane that meets $\Delta$ and such that 
one of the two closed half-spaces that it bounds contains $\Delta$).
}
$F$ of $\Delta$, 
its preimage $M_F$ in $M$, with the structures induced from $M$,
is a compact
connected symplectic manifold with a Hamiltonian $T$ action.
\end{Lemma}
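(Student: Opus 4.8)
The plan is to reduce the lemma to the standard structure of fixed-point sets of subtori. If $F = \Delta$ then $M_F = M$ and there is nothing to prove, so assume $F$ is a proper face. For $\eta \in \t$ write $\mu^\eta := \langle \mu, \eta \rangle \colon M \to \R$. By the description of faces in the footnote to the lemma, $F = \Delta \cap H$ for a supporting hyperplane $H$; write $H = \{ \eta \in \t^* : \langle \eta, \xi \rangle = c \}$ with $\xi \in \t \setminus \{0\}$ and $c \in \R$ chosen so that $\langle \eta, \xi \rangle \le c$ for all $\eta \in \Delta$. Since $\Delta = \image \mu$ and $H$ meets $\Delta$, the number $c$ is the maximum of $\mu^\xi$ on $M$, and $M_F = \mu^{-1}(F) = (\mu^\xi)^{-1}(c)$ is precisely the set on which $\mu^\xi$ attains its maximum. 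In particular $M_F$ is closed in the compact manifold $M$, hence compact, and, $F$ being an intersection of convex sets and therefore convex, Corollary~\ref{corollary of convexity} shows that $M_F$ is connected.

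The crux is to see that $M_F$ is a symplectic submanifold. Let $T_\xi := \overline{\exp(\R \xi)} \subseteq T$, a subtorus, and let $\xi_M$ be the vector field on $M$ generating the action of the one-parameter subgroup $t \mapsto \exp(t\xi)$. The relation between $\xi_M$, $\omega$, and $\mu^\xi$ expressing that $\mu$ is a momentum map, together with the nondegeneracy of $\omega$, shows that the critical set of $\mu^\xi$ is the zero set of $\xi_M$, which is the fixed-point set $M^{T_\xi}$. By the local normal form (equivalently, by linearizing the action near a point of $M^{T_\xi}$ and using that $T_\xi$ acts by linear symplectomorphisms), $M^{T_\xi}$ is a disjoint union of closed connected symplectic submanifolds of $M$, and $\mu^\xi$ is constant on each of them, since its differential vanishes along each and each is connected. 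Because $M$ has no boundary, every point of $M_F$ is a critical point of $\mu^\xi$, so $M_F \subseteq M^{T_\xi}$; since $\mu^\xi$ is locally constant on $M^{T_\xi}$ and $M_F$ is the closed set where $\mu^\xi = c$, the set $M_F$ is a union of connected components of $M^{T_\xi}$. Being connected, $M_F$ is therefore a single such component: a compact connected symplectic submanifold of $M$, with symplectic form $\iota^* \omega$, where $\iota \colon M_F \hookrightarrow M$ is the inclusion.

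Finally, $M_F = \mu^{-1}(F)$ is $T$-invariant because $\mu$ is $T$-invariant, so $T$ acts on $M_F$ and every generating vector field $\eta_M$ ($\eta \in \t$) is tangent to $M_F$. Pulling back along $\iota$ the identity relating $\eta_M$, $\omega$, and $\mu^\eta$ shows that $\mu|_{M_F} = \iota^* \mu \colon M_F \to \t^*$ is a momentum map for the $T$-action on $(M_F, \iota^* \omega)$; hence this action is Hamiltonian. Together with the previous paragraphs, this establishes all the assertions of the lemma.

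The step I expect to be the main obstacle is the middle paragraph, recognizing $M_F$ as a symplectic submanifold. This rests on the structure theorem for fixed-point sets of compact torus actions, a standard consequence of the local normal form recalled in the appendix; the useful device is to realize $M_F$ first as the maximum locus of the single smooth function $\mu^\xi$, which sidesteps any case analysis for faces of large codimension, after which the convexity package (Corollary~\ref{corollary of convexity}) supplies the last ingredient, connectedness.
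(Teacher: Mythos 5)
Your proof is correct. It differs from the paper's in how the middle step---showing $M_F$ is a symplectic submanifold---is organized. The paper argues pointwise: at each $p \in M_F$ it applies the local normal form to deduce that the defining covector $\xi$ of the supporting hyperplane lies in the Lie algebra of the stabilizer of $p$ and pairs with a definite sign with every isotropy weight $\eta_j$ at $p$, whence the intersection of $M_F$ with a neighbourhood of the orbit of $p$ is a $T$-invariant symplectic submanifold; connectedness then comes from Corollary~\ref{corollary of convexity}, exactly as in your last step. You instead package the same local information globally: you realize $M_F$ as the extremal level set of the single function $\mu^\xi$, observe that it therefore lies in the critical set $M^{T_\xi}$, and invoke the structure theorem for fixed-point sets of subtori (each component a closed symplectic submanifold on which $\mu^\xi$ is constant) to conclude that $M_F$ is a union of components, hence by connectedness a single one. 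Both arguments ultimately rest on the local normal form and on Corollary~\ref{corollary of convexity}; yours buys a cleaner global statement ($M_F$ is literally a component of $M^{T_\xi}$, recovering Remark~\ref{preimage of face 2} along the way) at the cost of quoting the fixed-point-set structure theorem, while the paper's weight computation is the version that gets reused in Lemma~\ref{criterion for complexity}. Your explicit verification that $\iota^*\mu$ is a momentum map for the restricted action is a detail the paper leaves implicit.
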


\begin{proof}

By the definition of ``face'',
there exist $\xi \in \ft$ and $a \in \R$
such that $\langle \mu(p) , \xi \rangle \geq a$ for all $p \in M$, with
equality exactly if $p \in M_F$.

Given any $p \in M_F$, let $\h$ be the Lie algebra of its stabilizer,
and let $\eta_j \in \h^*$ be the isotropy weights at $p$
(see Appendix~\ref{app:lnf}).
By the local normal form theorem,
the fact that
$\langle \mu(q) , \xi \rangle \geq \langle \mu(p) , \xi \rangle$
for all $q$ near $p$
implies that $\xi \in \h$
and that
$\langle \eta_j, \xi \rangle \geq 0$ for all $j$.
The local normal form theorem then implies
that the intersection of $M_F$
with a neighbourhood of the orbit of $p$ 
is a $T$ invariant symplectic submanifold.

By Corollary~\ref{corollary of convexity}, $M_F$ is connected.

\end{proof}

\begin{Remark}\labell{preimage of face 2}
Let $K$ be the identity component of the kernel of the $T$ action on $M_F$.
By the definition of the momentum map,
the affine span of $F$ is a translation of the annihilator in $\ft^*$
of the Lie algebra of $K$. Moreover,
$M_F$ is a connected component of  $M^K$, the set of points fixed by $K$,
because the component of $M^K$ containing $M_F$ must lie in the preimage
of the affine span of $F$.
In particular, the preimage in $M$ of any vertex of $\Delta$
is a component of the fixed point set $M^T$.
\end{Remark}

\begin{Lemma} \labell{criterion for complexity}
Given any face $F$ and any fixed point $p$ in the preimage $M_F$,
the complexity of the $T$ action on $M_F$  is 
the number of isotropy weights at $p$ that are parallel to $F$ 
minus the dimension of $F$.
Moreover, the linear span of the weights that are parallel to $F$
is a translation of the affine span of $F$.
\end{Lemma}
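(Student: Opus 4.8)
The plan is to use the local normal form at $p$ to identify $T_pM_F$ as a sum of weight spaces, to determine exactly which weights occur, and then to read off both the complexity and the affine span of $F$ from the momentum map formula.

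First I would record that, since $p$ is fixed by $T$, its isotropy weights $\eta_1,\dots,\eta_n$ (with $n=\tfrac12\dim M$, listed with multiplicity) lie in $\t^*$, and that the local normal form gives coordinates near $p$ identifying a neighbourhood of $p$ in $M$ with a neighbourhood of $0$ in $\bigoplus_j\C_{\eta_j}$, in which $\mu(z)=\mu(p)+\sum_j|z_j|^2\eta_j$ (with the sign and scale conventions of the appendix). As in the proof of Lemma~\ref{preimage of face}, choose $\xi\in\t$ and $a\in\R$ with $\langle\mu(q),\xi\rangle\ge a$ for all $q\in M$ and with equality exactly on $M_F$; then also $\langle\eta_j,\xi\rangle\ge 0$ for all $j$. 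Since $M_F=\{q\in M:\langle\mu(q),\xi\rangle=a\}$ and, in the local model, $\langle\mu(z),\xi\rangle-a=\sum_j|z_j|^2\langle\eta_j,\xi\rangle$ is a sum of nonnegative terms, the equation $\langle\mu(z),\xi\rangle=a$ forces $z_j=0$ whenever $\langle\eta_j,\xi\rangle>0$. Hence, setting $S:=\{\,j:\langle\eta_j,\xi\rangle=0\,\}$, the submanifold $M_F$ coincides near $p$ with the coordinate subspace $\bigoplus_{j\in S}\C_{\eta_j}$; in particular $T_pM_F=\bigoplus_{j\in S}\C_{\eta_j}$ and $\tfrac12\dim M_F=|S|$.

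Next I would compute the span of $\{\eta_j:j\in S\}$. By Remark~\ref{preimage of face 2}, the affine span of $F$ is a translate of the annihilator in $\t^*$ of $\mathrm{Lie}(K)$, where $K$ is the identity component of the kernel of the $T$ action on $M_F$. A subtorus of $T$ acts trivially on the connected manifold $M_F$ if and only if it acts trivially on $T_pM_F=\bigoplus_{j\in S}\C_{\eta_j}$ (the fixed set of such a subtorus is a closed submanifold which, by the local model, has full dimension near $p$, hence is all of $M_F$), and it acts trivially on $\bigoplus_{j\in S}\C_{\eta_j}$ precisely when every $\eta_j$ with $j\in S$ restricts trivially to it. Using that the $\eta_j$ are rational, it follows that $\mathrm{Lie}(K)=\bigcap_{j\in S}\ker\eta_j$, whose annihilator is $\mathrm{span}\{\eta_j:j\in S\}$; hence the direction space of $\mathrm{aff}(F)$ is $\mathrm{span}\{\eta_j:j\in S\}$. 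Since $\mu(p)\in F$, this yields $\mathrm{aff}(F)=\mu(p)+\mathrm{span}\{\eta_j:j\in S\}$, which is the last sentence of the lemma once we know that $S$ is exactly the set of $j$ with $\eta_j$ parallel to $F$. That equality of index sets is immediate from what we have: for $j\in S$ we get $\eta_j\in\mathrm{span}\{\eta_k:k\in S\}$, the direction space of $\mathrm{aff}(F)$, so $\eta_j$ is parallel to $F$; conversely, $\mathrm{aff}(F)$ lies in the supporting hyperplane $\{\langle\cdot,\xi\rangle=a\}$, so any weight parallel to $F$ pairs to $0$ with $\xi$ and hence belongs to $S$.

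Finally I would assemble the count: the number of isotropy weights at $p$ parallel to $F$ equals $|S|=\tfrac12\dim M_F$, while by~\eqref{dim F} applied to $M_F$---whose momentum image is $F$---the effective torus for the $T$ action on $M_F$ has dimension $\dim F$; therefore the complexity of that action is $\tfrac12\dim M_F-\dim F=|S|-\dim F$, as claimed. I expect the only mildly delicate points to be the identification $\mathrm{Lie}(K)=\bigcap_{j\in S}\ker\eta_j$---which rests on the rationality of the weights and on the standard fact that a connected subgroup fixing a point of a connected manifold and acting trivially on the tangent space there acts trivially on the whole manifold---and the bookkeeping of multiplicities; everything else is a direct reading of the local normal form and the convexity package.
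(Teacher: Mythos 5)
Your proof is correct and follows essentially the same route as the paper's: both rest on the local normal form at $p$, on Remark~\ref{preimage of face 2} identifying the affine span of $F$ with a translate of the annihilator of $\mathrm{Lie}(K)$, and on counting the weights of $T_pM_F$ to get $\tfrac12\dim M_F$. The only cosmetic difference is that you single out $T_pM_F$ as the coordinate subspace cut out by the supporting functional $\xi$, whereas the paper identifies it as the $K$-fixed subspace of $T_pM$; these are the same subspace, and the rest of the bookkeeping matches.
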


\begin{proof}
By Lemma \ref{preimage of face}, the preimage $M_F$ of $F$ in $M$
is a compact connected symplectic manifold with a Hamiltonian $T$ action.
Let $K$ be the identity component of the kernel of the $T$ action on $M_F$.
By Remark~\ref{preimage of face 2}, the affine span of $F$
is a translation of the annihilator in $\ft^*$ of the Lie algebra of $K$,
and $M_F$ is a connected component of $M^K$.
Hence, the dimension of $T/K$ is the dimension of $F$,
and the weights for the action on $T_p M_F$
are those weights for the action on $T_p M$ that 
annihilate the Lie algebra of $K$, or equivalently, are parallel to $F$.
Therefore, the dimension of $M_F$ is twice the number of such weights.

Finally, by the local normal form theorem,
there is a neighbourhood of $p$ in $M_F$
that is equivariantly symplectomorphic 
to $T_pM_F$.
Since $K$ is the identity component of the stabilizer
of an open dense set of points in $M_F$,
the identity component of the kernel of the isotropy representation on $T_pM_F$
is also $K$.
Hence, the isotropy weights at $p$
span the annihilator in $\t^*$ of the Lie algebra of $K$.
\end{proof}

\begin{Corollary} \labell{complexity of face}
Let $M_F$ and $M_{F'}$ be the preimage of faces $F$ and $F'$ of $\Delta$,
respectively.
If $F \subseteq F'$, then the complexity of $M_F$ is less than or equal to
the complexity of $M_{F'}$.
\end{Corollary}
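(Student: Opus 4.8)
The plan is to reduce the statement to a single fixed point and one rank inequality, applying Lemma~\ref{criterion for complexity} twice. First I would note that since $F \subseteq F'$ we have $M_F = \mu^{-1}(F) \subseteq \mu^{-1}(F') = M_{F'}$. Moreover $M_F$ is non-empty (every face of $\Delta$ is non-empty), and by Lemma~\ref{preimage of face} it is a compact symplectic manifold with a Hamiltonian $T$ action; hence, by the argument recalled in Section~\ref{sec:background} (fix $\xi \in \ft$ generating a dense one-parameter subgroup and minimize $\langle \mu(\cdot),\xi\rangle$ over $M_F$), there is a point $p$ fixed by $T$ with $p \in M_F$. Since $M_F \subseteq M_{F'}$, this same $p$ is a $T$-fixed point of $M_{F'}$ as well, so we may apply Lemma~\ref{criterion for complexity} with the fixed point $p$ to both faces.

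Next, let $\eta_1,\dots,\eta_N$ be the isotropy weights of the $T$ action on $M$ at $p$, let $N_F$ be the number of the $\eta_j$ that are parallel to $F$, and let $N_{F'}$ be the number of the $\eta_j$ that are parallel to $F'$. Lemma~\ref{criterion for complexity} gives
\[
\operatorname{complexity}(M_F) = N_F - \dim F
\quad\text{and}\quad
\operatorname{complexity}(M_{F'}) = N_{F'} - \dim F' ,
\]
and, moreover, the weights among $\eta_1,\dots,\eta_N$ that are parallel to $F$ span the linear subspace $V \subseteq \ft^*$ that is the translate of the affine span of $F$, with $\dim V = \dim F$, while those parallel to $F'$ span the corresponding subspace $V'$, with $\dim V' = \dim F'$. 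Because $F \subseteq F'$ forces $V \subseteq V'$, every weight parallel to $F$ is also parallel to $F'$; in particular the weights counted by $N_F$ form a sub-multiset of those counted by $N_{F'}$.

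Finally I would run the rank argument in the quotient $V'/V$, which has dimension $\dim F' - \dim F$. The images in $V'/V$ of the $N_{F'}$ weights parallel to $F'$ span $V'/V$ (since those weights span $V'$), while the images of the $N_F$ weights parallel to $F$ are zero (since those weights lie in $V$); hence the images of the remaining $N_{F'} - N_F$ weights already span $V'/V$, so $N_{F'} - N_F \ge \dim(V'/V) = \dim F' - \dim F$. Therefore
\[
\operatorname{complexity}(M_{F'}) - \operatorname{complexity}(M_F)
= (N_{F'} - N_F) - (\dim F' - \dim F) \ge 0 ,
\]
which is the claim. The only step with real content is this spanning/rank inequality; everything else is bookkeeping, and the one point to state carefully is that the two applications of Lemma~\ref{criterion for complexity} use the \emph{same} list of isotropy weights of $T$ on $T_pM$, merely filtered by the two different parallelism conditions. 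I do not anticipate a serious obstacle.
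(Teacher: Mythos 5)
Your proposal is correct and follows essentially the same route as the paper: pick a $T$-fixed point $p\in M_F\subseteq M_{F'}$, apply Lemma~\ref{criterion for complexity} to both faces with that same $p$, and deduce from the spanning statement that the number of weights parallel to $F'$ but not to $F$ is at least the codimension of $F$ in $F'$. Your explicit rank argument in $V'/V$ is just a fuller write-up of the one-line inequality the paper states.
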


\begin{proof}
By Lemma~\ref{preimage of face},
$M_F$ and $M_{F'}$ are compact connected symplectic manifolds
with Hamiltonian $T$ actions. 
Consider a fixed point $p \in M_F$.
Since the linear span of the isotropy weights at $p$
that are parallel to $F'$
is a translation of the affine span of $F'$,
the number of weights that are parallel to $F'$ but not $F$ must
be greater than or equal to the codimension of $F$ in~$F'$.
\end{proof}

Given a point $\beta \in \t^*$, 
let $M_\beta := \mubar^{-1}(\{\beta\}) = \mu^{-1}(\{\beta\})/T$
be the \textbf{reduced space} at $\beta$.
If $T_\eff$ acts freely on $\mu^{-1}(\{\beta\})$, 
then $M_\beta$ is naturally a manifold.
More generally, the following holds.

\begin{Lemma} \labell{reduced over interior}\ 
Given a point $\beta$ in the relative interior of $\Delta$,
the set of free orbits in the reduced space $M_\beta$
is a connected open dense subset of $M_\beta$;
moreover, it is naturally\footnote
{Explicitly, there exists a unique manifold structure
on the set of free orbits in $M_\beta$ such that a real valued function
on this set is smooth if and only if its pullback 
to the preimage in $\mu^{-1}(\{\beta\})$ extends to a smooth function
on an open subset of $M$.
} 
a $2k$ dimensional manifold,
where $k$ is the complexity of the $T$ action on $M$.
\end{Lemma}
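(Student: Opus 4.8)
The plan is to derive all three assertions from the local normal form, working one $T$-orbit at a time. First I would fix notation: write $Z=\mu^{-1}(\{\beta\})$, which is compact and, since $\{\beta\}$ is convex, connected by Corollary~\ref{corollary of convexity}; let $K$ be the kernel of the $T$-action on $M$, with Lie algebra $\k$; and let $Z^\circ=Z\cap M^{\mathrm{free}}$, where $M^{\mathrm{free}}$ is the open (dense, connected) locus where $T_\eff$ acts freely. Then $M_\beta=Z/T$ and its free part is $Z^\circ/T$. For the manifold structure I would take $p\in Z^\circ$: its stabilizer is $K$, which acts trivially on $T_pM$, so the local normal form models a neighbourhood of $Tp$ equivariantly on $T_\eff\times\mathfrak{m}^*\times V$ with momentum map $([t],a,v)\mapsto\beta+a$, where $\mathfrak{m}^*\cong\mathrm{ann}_{\t^*}(\k)$ (a translate of the direction space of $\Delta$) and $V$ is the symplectic slice. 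Since $Tp$ is isotropic of dimension $\dim T_\eff=\dim\Delta$, one gets $\dim V=\dim M-2\dim T_\eff=2k$; hence in this model $Z^\circ$ is $T_\eff\times\{0\}\times V$ with $T_\eff$ acting freely, so the corresponding open subset of $M_\beta$ is $V$, a $2k$-manifold. One then checks routinely that these charts are mutually compatible and realize the smooth structure described in the footnote, which settles the ``$2k$-dimensional manifold'' claim.

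The core step is the local model at an arbitrary orbit. For $p\in Z$ with stabilizer $H\supseteq K$, write $H_\eff=H/K$; the local normal form models a neighbourhood of $Tp$ on $T\times_H(\mathfrak{m}^*\times V)$ with momentum map $[t,a,v]\mapsto\beta+a+\iota(\Phi_V(v))$, where $V=\bigoplus_j\C_{w_j}$ is the slice, the $w_j\in\h^*$ are its isotropy weights, $\Phi_V(v)=\sum_j|v_j|^2 w_j$, and $\iota\colon\h^*\hookrightarrow\t^*$ is the splitting; since $\mathrm{ann}_{\t^*}(\h)$ and $\iota(\h^*)$ are complementary, $a+\iota(\Phi_V(v))=0$ forces $a=0$ and $\Phi_V(v)=0$, so in this model $Z$ is $T\times_H\Phi_V^{-1}(0)$ and the corresponding piece of $M_\beta$ is the linear reduced space $\Phi_V^{-1}(0)/H_\eff$. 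I would extract two facts about the $w_j$. First, $\ker(H\curvearrowright V)\subseteq K$: a subgroup acting trivially near $p$ acts trivially on all of $M$, since its fixed-point set, a closed submanifold with nonempty interior, is the whole connected manifold. Second --- and this is the only place $\beta\in\mathrm{relint}\,\Delta$ is used --- the $w_j$ positively span $\h_\eff^*$, i.e.\ $0$ is interior to their convex hull: by the convexity package $\mu$ is open onto $\Delta$, so the momentum image of the model neighbourhood is a neighbourhood of $\beta$ in $\mathrm{aff}\,\Delta$, and unwinding the formula for $\mu$ above shows this is equivalent to the cone generated by the $w_j$ being all of $\h_\eff^*$.

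Density is then easy. By the second fact $0$ is a strictly positive combination of the $w_j$, so $\Phi_V^{-1}(0)$ contains vectors with all coordinates nonzero; by the first fact any such vector has trivial $H_\eff$-stabilizer (its $H$-stabilizer is $\ker(H\curvearrowright V)\subseteq K$); and such vectors are dense in $\Phi_V^{-1}(0)$, since from any $v\in\Phi_V^{-1}(0)$ one reaches one by moving the squared moduli $|v_j|^2$ linearly toward those of an all-nonzero point, which stays in $\Phi_V^{-1}(0)$ because $\Phi_V$ depends linearly on the $|v_j|^2$. Transporting this through the local model, $Z^\circ$ is dense in $Z$, so $Z^\circ/T$ is dense in $M_\beta$.

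For connectedness I would use that $M_\beta$ is connected (Corollary~\ref{corollary of convexity}) and that $Z^\circ/T=M_\beta\setminus S$ with $S$ (the image of the non-free orbits) closed, together with the point-set fact: if every point of $S$ has a neighbourhood $U$ in $M_\beta$ with $U\setminus S$ connected and nonempty, then $M_\beta\setminus S$ is connected --- indeed a splitting $M_\beta\setminus S=A\sqcup B$ into nonempty open sets would give $\ol A\cup\ol B=M_\beta$, hence some $s\in\ol A\cap\ol B\subseteq S$, and then the connected set $U\setminus S$ for that $s$ would lie in one of $A,B$ while meeting the other. The required $U$ is the local model $\Phi_V^{-1}(0)/H_\eff$, so what remains is to show that in it the non-free locus --- the union of the images of $\Phi_V^{-1}(0)\cap V^{H'}$ over proper subgroups $H'$ of $H_\eff$, each obtained by setting a coordinate subset to zero --- has real codimension at least two. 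I expect this to be the main obstacle: ruling out codimension-one non-free strata is exactly where the positive-spanning condition on the slice weights must be exploited (a count of dimensions shows the offending configurations force all but at most one weight into a hyperplane, contradicting positive spanning). The local-model computations and the density step, by contrast, are routine once the normal form is in hand.
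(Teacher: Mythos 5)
The paper does not prove this lemma at all: it cites Lerman--Sjamaar \cite{lerman-sjamaar}. Your proposal reconstructs the local analysis behind that citation directly from the local normal form, which is a genuinely different (self-contained) route; the decomposition into the three facts --- the slice at a free point has dimension $2k$, the kernel of $H$ on the slice lies in $K$, and the slice weights positively span $\h_\eff^*$ because $\mu$ is open and $\beta$ is in the relative interior of $\Delta$ --- is exactly the right skeleton, and each of those steps checks out.

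The one place you stop short is the local connectedness of the free part, and there you propose a harder route than necessary. You do not need the non-free locus to have codimension at least two (a stratification fact that is true but takes real work, and your parenthetical dimension count is not yet an argument). Instead, use what you have already established. Let $D\subseteq \Phi_V^{-1}(0)$ be the set of points with all coordinates nonzero. In the coordinates $s_j=|v_j|^2/2$ the image of $D$ is $\{s\in\R_{>0}^{\ell}:\sum_j s_j w_j=0\}$, the intersection of a convex open cone with a subspace, hence convex and connected (nonempty by positive spanning), and the fibres of $v\mapsto(|v_j|^2/2)$ over it are tori, so $D$ is connected. Your density argument shows $\ol{D}=\Phi_V^{-1}(0)$, and fact (1) shows $D$ lies in the free locus $F$, which is open. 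Thus $D\subseteq F\subseteq\ol{D}$ with $D$ connected, so $F$ is connected, and so is its $H$-quotient, which is the set $U\ssminus S$ your point-set lemma requires. With that substitution the proof is complete; the rest of the proposal (the manifold structure on the free part, density, and the reduction of global connectedness of $M_\beta\ssminus S$ to the local statement) is correct as written.
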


\begin{proof}
This consequence of the local normal form theorem
and the convexity package
is proved by Lerman and Sjamaar in~\cite{lerman-sjamaar}.
\end{proof}

The \textbf{dimension} of a reduced space $M_\beta$
is  the dimension of an open dense subset of $M_\beta$ that is a manifold;
it is well defined,
by Lemmas~\ref{preimage of face} and~\ref{reduced over interior}.
For any nonnegative integer $k$,
denote by $\mathbf{\Delta}_{\mathbf{k}}$ the set of points $\beta$ in $\Delta$
such that $\dim M_\beta = 2k$,
and denote $\mathbf{\Delta}_{\mathbf{\leq k}} := 
\Delta_{0} \cup \ldots \cup \Delta_{k}$.
By the connectedness of the momentum map fibres,
$\Delta_{0}$ 
is the set of points $\beta$ in $\Delta$
such that the reduced space $M_\beta$ consists of a single orbit.



\begin{Lemma} \labell{union of faces}
For any nonnegative integer $k$,
the set $\Delta_{\leq k}$ is a union of faces of $\Delta$.
Consequently, there exists an open convex subset $U$ of $\t^*$
such that $\Delta \ssminus \Delta_{\leq k} = \Delta \cap U$.
\end{Lemma}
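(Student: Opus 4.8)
The plan is to show that $\Delta_{\le k}$ is a union of faces, after which the second assertion is a routine fact about polytopes (the complement of a union of faces is the intersection of $\Delta$ with an open convex set, e.g. the interior of the convex hull of the barycenters of the remaining faces, or more simply: since $\Delta_{\le k}$ is closed and a union of faces, its complement in $\Delta$ is relatively open and one checks it is cut out by a single open half-space-free convex region using the supporting hyperplanes of the maximal faces in $\Delta_{\le k}$). So I focus on the first claim. Let me explain why I would expect $\Delta_{\le k}$ to be a union of faces: by Corollary \ref{complexity of face}, the function $F \mapsto (\text{complexity of } M_F)$ is monotone under inclusion of faces, so the set of faces $F$ with complexity $\le k$ is downward closed; the content is to identify the complexity of $M_F$ with the common dimension-over-$2$ of the reduced spaces $M_\beta$ for $\beta$ in the relative interior of $F$, and to show every $\beta \in \Delta$ lies in the relative interior of a unique face.

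First I would recall the standard stratification of $\Delta$ by faces: every $\beta \in \Delta$ lies in the relative interior of exactly one face $F(\beta)$ of $\Delta$ (take $F(\beta)$ to be the smallest face containing $\beta$). Next, the key step: I claim that for $\beta$ in the relative interior of a face $F$, the reduced space $M_\beta$ — computed inside $M$ — agrees with the reduced space of the Hamiltonian $T$-manifold $M_F$ at $\beta$, and $\beta$ lies in the relative interior of $\Delta(M_F) := \mu(M_F) = F$. Granting this, Lemma \ref{reduced over interior} applied to $M_F$ gives $\dim M_\beta = 2 \cdot (\text{complexity of } M_F)$, which is independent of $\beta$ in the relative interior of $F$. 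Hence $\Delta_j$ is exactly the union of the relative interiors of those faces $F$ with complexity of $M_F$ equal to $j$, and therefore $\Delta_{\le k}$ is the union of the relative interiors of the faces of complexity $\le k$. By Corollary \ref{complexity of face} this collection of faces is closed under passing to subfaces, so the union of their relative interiors equals the union of the faces themselves (each face is the disjoint union of the relative interiors of its subfaces). That exhibits $\Delta_{\le k}$ as a union of faces.

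The main obstacle is the key step identifying reduced spaces: showing that for $\beta$ in the relative interior of $F$, $\mu^{-1}(\beta)$ lies entirely in $M_F$ — equivalently that $\mu^{-1}(\text{relint } F) \subseteq M_F$ — and that the reduction is the same whether performed in $M$ or in the submanifold $M_F$. The inclusion is clean: $M_F$ is the preimage of the supporting hyperplane $H \supseteq F$, and since $\mu$ has image $\Delta$ with $F = \Delta \cap H$, any point with $\mu$-value in $\text{relint } F \subseteq H$ maps into $M_F = \mu^{-1}(H \cap \Delta)$; moreover $\mu(M_F) = F$ by Lemma \ref{preimage of face} (the momentum image of a compact connected Hamiltonian $T$-manifold is a polytope, and it is contained in $H \cap \Delta = F$ and contains the relative interior of $F$ since those fibres are nonempty). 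The equality of the two reduced spaces, and of their natural smooth structures on the free locus, then follows because near any orbit the local normal form for $M$ restricts to the local normal form for the submanifold $M_F$ — this is exactly the mechanism used in the proofs of Lemmas \ref{preimage of face} and \ref{criterion for complexity} — so the quotient $\mu^{-1}(\beta)/T$ and its differentiable structure are computed identically in both. I would also double-check the edge case $F = \Delta$, where the statement is a tautology, and confirm that $\Delta_{\le k}$ is nonempty (it contains every vertex, since vertices have complexity $0$ by Lemma \ref{criterion for complexity}), so the second assertion is not vacuous.
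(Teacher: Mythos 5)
Your argument is essentially the paper's own proof, just with the details spelled out: identify $\Delta_j$ with the union of relative interiors of faces $F$ whose preimage $M_F$ has complexity $j$ (via Lemma~\ref{preimage of face} and Lemma~\ref{reduced over interior} applied to $M_F$), use Corollary~\ref{complexity of face} to see this collection of faces is downward closed, and obtain $U$ as the intersection of open half-spaces bounded by supporting hyperplanes of the faces in $\Delta_{\le k}$. The only slip is the parenthetical claim that every vertex has complexity $0$ (false in general --- a fixed surface can map to a vertex, as in case~(2) of Corollary~\ref{M4}, and indeed $\Delta_{\le 0}$ can be empty, as in Example~(9)); but nonemptiness of $\Delta_{\le k}$ is not needed, since one may take $U = \t^*$ when it is empty.
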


\begin{proof}
By Lemma~\ref{preimage of face}, 
the preimage $M_F := \mu^{-1}(F)$ of each face $F$ of $\Delta$
is a compact connected symplectic manifold with a Hamiltonian $T$ action.
Hence, by Lemma~\ref{reduced over interior},
each $\Delta_k$ is the union of the relative interiors of those faces $F$
for which the complexity of $M_F$ is equal to $k$.
The first claim then follows from Corollary~\ref{complexity of face}.

To prove the second claim, for each face $F$ in $\Delta_{\leq k}$
choose a supporting hyperplane $H_F$ of $\Delta$ 
such that $F = H_F \cap \Delta$.
Then the intersection $U$ of the appropriate open half-spaces bound by
these hyperplanes is an open convex set.
\end{proof}

\begin{Remark}[Toric manifolds] \labell{toric}
If we assume that the  $T$ action on $M$ is toric, then the quotient $M/T$ is
homeomorphic to the disk $D^n$, where $n = \frac{1}{2} \dim M$.
To see this, first note that Lemma~\ref{preimage of face} and
Corollary~\ref{complexity of face}
 together show that 
the preimage $M_F := \mu^{-1}(F)$ of each face $F$ of $\Delta$ is a symplectic
toric manifold.  
Hence, by Lemma~\ref{reduced over interior},  the reduced space $M_\beta$ is a
point for all $\beta \in \Delta$,
that is, $\Delta_0 = \Delta$.  Thus, the orbital momentum map $\overline{\mu}
\colon M/T \to \Delta$ is
a bijection; since it is proper and continuous, this implies that it is a
homeomorphism.  Since $\Delta$ is a convex polytope,
this proves the claim.

More generally, 
consider a complete unimodular fan in $\R^n$.
Even if the fan does not correspond to any convex polytope,
we can construct a  complex toric manifold $M$ from the
fan, as described by Audin in \cite{audin}.
The geometric quotient $M/T$ is still homeomorphic to a sphere;
see \cite[Lemma~3.2]{KT:toric}.
\end{Remark}

{
A collection of vectors in the vector space $\t^*$
is \textbf{in general position}
if every sub-collection of size $< \dim \t^*$ is linearly independent. 

\begin{Lemma} \labell{toric strata}
Assume that $M$ is compact.
\begin{enumerate}[leftmargin=1cm]
\item
Assume that there exists an isolated fixed point in $M$
whose momentum image is a vertex of $\Delta$;
in particular, this holds if the fixed points in $M$ are isolated.
Then $\Delta_0 \neq \emptyset$.

\item
Assume that the $T$ action on $M$ has complexity $\geq 1$
and that the isotropy weights at every fixed point 
are in general position. Then $\Delta_0 = \del \Delta$.

\end{enumerate}
\end{Lemma}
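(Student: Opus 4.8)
For part~(1), the plan is to locate a point of $\Delta_0$ explicitly. Suppose $p$ is an isolated fixed point with $v:=\mu(p)$ a vertex of $\Delta$. By Remark~\ref{preimage of face 2}, the preimage $\mu^{-1}(v)$ is a connected component of the fixed set $M^T$; since it contains $p$, which is isolated in $M^T$, it equals $\{p\}$. Hence the reduced space $M_v=\mu^{-1}(\{v\})/T$ is a single orbit, so $v\in\Delta_0$ and $\Delta_0\neq\emptyset$. The special case in which all fixed points of $M$ are isolated reduces to this: the nonempty compact convex polytope $\Delta$ has a vertex $v$, and by Remark~\ref{preimage of face 2} the nonempty set $\mu^{-1}(v)$ is a component of $M^T$, hence a single isolated fixed point mapping to $v$.

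For part~(2) I would first pass to the effective quotient $T_\eff$, which changes neither $\Delta$ nor the reduced spaces nor the isotropy weights, so we may assume the action is effective; then $\dim\Delta=\dim T=:d$ by~\eqref{dim F}, the complexity of $M$ is $k=\half\dim M-d\ge 1$, the weights at each fixed point span $\t^*$ (by the last part of Lemma~\ref{criterion for complexity}, applied to $F=\Delta$), and the general position hypothesis guarantees that any $d=\dim\t^*$ of the isotropy weights at a given fixed point are linearly independent. The inclusion $\Delta_0\subseteq\del\Delta$ is then immediate: for $\beta$ in the relative interior of $\Delta$, Lemma~\ref{reduced over interior} gives $\dim M_\beta=2k\ge 2$, so $M_\beta$ is not a single orbit and $\beta\notin\Delta_0$.

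For the reverse inclusion $\del\Delta\subseteq\Delta_0$, the plan is to show that $M_F$ has complexity $0$ --- equivalently, is a toric manifold --- for every facet $F$ of $\Delta$. Granting this, Corollary~\ref{complexity of face} (monotonicity of complexity along inclusions of faces), together with the fact that every proper face of $\Delta$ lies in a facet, shows that $M_G$ has complexity $0$ for every proper face $G$; then Lemma~\ref{reduced over interior} applied to $M_G$ makes $M_\beta$ a single point for every $\beta$ in the relative interior of $G$, and since $\del\Delta$ is the union of the relative interiors of the proper faces of $\Delta$ (and is empty when $\dim\Delta=0$), this gives $\del\Delta\subseteq\Delta_0$. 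To prove the claim for a facet $F$: by Lemma~\ref{preimage of face}, $M_F$ is a compact connected symplectic manifold with a Hamiltonian $T$ action, so it contains a fixed point $p$; by Lemma~\ref{criterion for complexity} the complexity of $M_F$ equals $m-\dim F=m-(d-1)$, where $m$ is the number of isotropy weights at $p$ that are parallel to $F$, and those $m$ weights span a subspace of dimension $\dim F=d-1$ (so in particular $m\ge d-1$). If $m\ge d$, then $d$ of the isotropy weights at $p$ would lie in a subspace of dimension $d-1$ and hence be linearly dependent, contradicting general position; therefore $m=d-1$ and $M_F$ has complexity $0$.

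The main obstacle is exactly this last step: passing from Lemma~\ref{criterion for complexity} --- which only says that the isotropy weights parallel to $F$ span $\dim F$ dimensions --- to the statement that there are at most $\dim F$ of them. This is where the general position hypothesis enters, and it carries essentially the whole content of part~(2); everything else is organisational, using the face poset of $\Delta$, monotonicity of complexity, and the dimension formula for reduced spaces.
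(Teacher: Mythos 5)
Your proof is correct and follows essentially the same route as the paper's: part (1) via Remark~\ref{preimage of face 2} applied to a vertex whose preimage contains an isolated fixed point, and part (2) by showing that the preimage of each proper face is toric using Lemma~\ref{criterion for complexity} together with general position (the paper argues directly on the face whose relative interior contains $\beta$ rather than routing through facets and Corollary~\ref{complexity of face}, but the counting is identical). One caveat, which applies equally to the paper's own argument: the step where you rule out $m\ge d$ weights parallel to a facet invokes the linear dependence of a subcollection of size exactly $d=\dim\t^*$, so it requires reading ``in general position'' as ``every sub-collection of size $\le\dim\t^*$ is linearly independent''; the strict inequality in the stated definition would not suffice for facets.
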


\begin{proof}
Part~(1) is a consequence of the following two facts.
First, since $M$ is compact, its momentum image $\Delta$ has a vertex.
Second, by Remark~\ref{preimage of face 2},
the preimage of any vertex of $\Delta$
is a connected component of the fixed point set $M^T$.

We now prove Part (2).
First, consider $\beta \in \del \Delta$.
Let $F \subsetneq \Delta$ be the face whose relative interior contains $\beta$.
By Lemma~\ref{preimage of face}, the preimage $M_F$ of $F$ in $M$ 
is a compact connected symplectic $T$ manifold with a Hamiltonian $T$ action.
So it has a fixed point $p$.
Since $\dim F < \dim \t^*$ and the isotropy weights at $p$ 
are in general position,
Lemma~\ref{criterion for complexity} implies that $M_F$ is toric.  
Therefore, by Lemma~\ref{reduced over interior}, 
$\beta \in \Delta_0$.
In contrast, if $\beta$ is in the relative interior of $\Delta$
then, since the action of $T$ on $M$ is not toric, 
Lemma~\ref{reduced over interior} implies that 
$\beta$ is not in $\Delta_0$. 
\end{proof}

\begin{Remark} \labell{general position complexity 1}
In Part~(2) of Lemma~\ref{toric strata}, 
if the complexity of the $T$ action on $M$ is \emph{equal} to one,
then the converse is true too, so $\Delta_0 = \del \Delta$
if and only if the isotropy weights at every fixed point 
are in general position.
\end{Remark}

}


When the complexity of the Hamiltonian $T$ action is equal to one, 
we denote by $\mathbf{\Delta}_\textbf{\short}$ the set of points 
in $\Delta$
whose reduced space contains a single orbit
and by $\mathbf{\Delta}_\textbf{\tall}$ the set of points in $\Delta$
whose reduced space is two dimensional.
Thus, 
$$
 \Delta_\short = \Delta_{0} \quad \text{ and } \quad
 \Delta = \Delta_\short \sqcup \Delta_\tall.$$
By Lemma~\ref{union of faces}, $\Delta_\short$ is closed,

\begin{Proposition} \labell{prop:tall}
Let $T$ be a torus and $\t^*$ the dual to its Lie algebra.
Let $M$ be a compact connected symplectic manifold with a $T$ action
and with a momentum map $\mu \colon M \to \t^*$ with image $\Delta$.
Assume that the action has complexity one.

Then
there exists a connected closed oriented surface $\Sigma$ and a homeomorphism
$$ (M/T)_\tall \to \Delta_\tall \times \Sigma$$
that intertwines the orbital momentum map $\mubar$ 
with the projection map to $\Delta_\tall$.

If $\Delta_\short$ is non-empty, then $\Sigma$ is a two-sphere.
\end{Proposition}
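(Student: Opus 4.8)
The plan is to prove that the orbital momentum map restricts to a locally trivial topological fibre bundle $\mubar \colon (M/T)_\tall \to \Delta_\tall$, where $(M/T)_\tall := \mubar^{-1}(\Delta_\tall)$, whose fibre is a closed connected oriented surface, and then to exploit that the base is contractible. We may replace $T$ by $T_\eff$, so $\dim T = n-1$ where $2n = \dim M$. By Lemma~\ref{union of faces}, $\Delta_\short = \Delta_{\leq 0}$ is a union of faces and $\Delta \ssminus \Delta_\short$ equals $\Delta \cap U$ for an open convex set $U$; hence $\Delta_\tall = \Delta \cap U$ is relatively open in $\Delta$ and convex, in particular nonempty (the action is not toric), connected, and contractible. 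By Lemma~\ref{reduced over interior}, for $\beta \in \Delta_\tall$ the set of free orbits in $M_\beta$ is an open dense surface; as the local normal form shows below, $M_\beta$ is locally Euclidean of dimension two at every point, so it is a closed connected surface, oriented by the reduced form, and $\mubar$ is proper with these surfaces as fibres.

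The main step is local triviality: every $\beta_0 \in \Delta_\tall$ has a convex neighbourhood $V \subseteq \Delta_\tall$ (possible since $\Delta_\tall$ is relatively open) such that $\mubar \colon \mubar^{-1}(V) \to V$ is homeomorphic over $V$ to the projection $V \times M_{\beta_0} \to V$. I would deduce this from the local normal form, distinguishing two cases. If $\beta_0$ lies in the interior of $\Delta$, then Lerman--Sjamaar's local normal form for reduced spaces near an interior value (the source of Lemma~\ref{reduced over interior}) exhibits the orbit-type stratification of $M_\beta$ as $\beta$-independent near $\beta_0$ and $\mubar^{-1}(V)$ as a product of $V$ with the local model of $M_{\beta_0}$; assembling these models (equivalently, a stratified Ehresmann/Thom isotopy argument) gives the trivialisation. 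If $\beta_0$ lies in the relative interior of a proper face $F$, then by Lemma~\ref{preimage of face} the preimage $M_F$ is a compact connected symplectic submanifold with a Hamiltonian $T$ action, of complexity one (Lemma~\ref{reduced over interior} applied to $M_F$, whose momentum image is $F$, using $\dim M_{\beta_0}=2$), and $\mu^{-1}(\beta_0) \subseteq M_F$. Since $\mu^{-1}(\beta) \to \mu^{-1}(\beta_0)$ as $\beta \to \beta_0$, by continuity and compactness of $M$, we may shrink $V$ so that $\mu^{-1}(V)$ lies in a fixed $T$-invariant tubular neighbourhood of $M_F$, which the local normal form identifies with a neighbourhood of the zero section in the normal bundle $N \to M_F$. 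A dimension count using Lemma~\ref{criterion for complexity} shows that the isotropy weights normal to $M_F$ form a basis of the directions along which $F$ opens into $\Delta$; hence reducing this model at $\beta$, for $\beta$ near $\beta_0$, consumes exactly the normal directions and returns a space homeomorphic to $M_{\beta_0}$, uniformly in $\beta$, giving the trivialisation near the boundary. I expect this boundary case to be the main obstacle: one must check that reducing the normal-bundle model at a value pushed slightly into the interior of $\Delta$ genuinely recovers $M_{\beta_0}$ up to homeomorphism and that the identifications vary continuously with $\beta$; the interior case similarly needs the routine verification that the family of local models assembles compatibly with $\mubar$.

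Granting local triviality, $\mubar \colon (M/T)_\tall \to \Delta_\tall$ is a locally trivial fibre bundle with fibre a closed connected oriented surface over the nonempty convex --- hence contractible and paracompact --- base $\Delta_\tall$. Every fibre bundle over such a base is trivial, and a trivialisation is precisely a homeomorphism $(M/T)_\tall \to \Delta_\tall \times \Sigma$ intertwining $\mubar$ with the projection to $\Delta_\tall$, where $\Sigma := M_{\beta_0}$ for any chosen $\beta_0 \in \Delta_\tall$; in particular this re-proves that the genus of $M_\beta$ does not depend on $\beta$.

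Finally, suppose $\Delta_\short \neq \emptyset$; we show $\Sigma \cong S^2$. Being a nonempty union of faces of the compact polytope $\Delta$, the set $\Delta_\short$ contains a vertex $v$ of $\Delta$; since $v \in \Delta_\short = \Delta_0$, the reduced space $M_v$ is a single orbit, and as $M_v$ is also a connected component of $M^T$ (Remark~\ref{preimage of face 2}) it is a single fixed point $p$ with $\mu(p) = v$. The local normal form identifies a neighbourhood of $p$ with a ball about the origin in $\C^n$ on which $T$ acts linearly with isotropy weights $\eta_1, \dots, \eta_n$ and momentum map $z \mapsto v + \half\sum_j |z_j|^2 \eta_j$; since $v$ is a vertex of $\Delta$, the $\eta_j$ span $\ft^*$ and $\mathrm{cone}(\eta_1,\dots,\eta_n)$, the tangent cone of $\Delta$ at $v$, is pointed. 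For $\beta$ close to $v$ the fibre $\mu^{-1}(\beta)$ lies inside this ball, so $M_\beta$ is homeomorphic to an open subset of the reduction $\mu_{\C^n}^{-1}(\beta - v)/T$, which is the compact two-dimensional symplectic toric orbifold with one-dimensional polytope $\{t \in \R^n_{\geq 0} : \sum_j t_j \eta_j = \beta - v\}$ --- bounded because the cone is pointed. Such an orbifold surface is homeomorphic to $S^2$, and a nonempty compact open subset of $S^2$ is all of $S^2$; hence $M_\beta \cong S^2$, and therefore $\Sigma \cong M_\beta \cong S^2$.
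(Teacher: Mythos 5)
The paper's own proof is short: it obtains the convex open set $U$ with $\Delta_\tall = \Delta \cap U$ from Lemma~\ref{union of faces} (as you do), and then quotes two external results --- Proposition~2.2 of \cite{KT:globun} for the product structure over $\Delta_\tall$, and Lemma~5.7 of \cite{KT:locun} for the genus-zero statement. Your proposal instead tries to reprove both cited results. The overall strategy (a trivial surface bundle over the convex set $\Delta_\tall$; a local model at a fixed point over a vertex of $\Delta_\short$ to pin down the genus) is the right one, but the execution has a genuine gap exactly where the cited results do the work.

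The gap is the local triviality of $\mubar$ over $\Delta_\tall$, together with the prior claim that each fibre $M_\beta$ is a closed topological surface. You assert the latter (``locally Euclidean of dimension two at every point'') without argument; it is true, but it requires analysing the local models of two-dimensional reduced spaces at singular orbits (cones over quotients of circles), which is part of what \cite{KT:locun, KT:globun} establish. More seriously, your interior-case argument via a ``stratified Ehresmann/Thom isotopy'' does not go through as stated: Thom's first isotopy lemma requires a proper stratified submersion, and over interior values of $\Delta_\tall$ the fibre $\mu^{-1}(\{\beta_0\})$ can contain isolated fixed points (see Examples~(4) and~(9) of the paper), on whose stratum $\mu$ is constant rather than submersive. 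Near such values the reduced spaces undergo genuine wall-crossing transitions, and the fact that their homeomorphism type nevertheless does not change --- and that the identifications can be chosen continuously --- is precisely the nontrivial content of \cite[Proposition~2.2]{KT:globun}; it is not a routine verification. Your boundary-face argument has the same character: you yourself flag that one must check that reducing the normal-bundle model at nearby interior values returns $M_{\beta_0}$, and that check is the theorem. By contrast, your final paragraph (a vertex of $\Delta_\short$ lies under an isolated fixed point, whose local model reduces at nearby values to a compact toric two-orbifold over a segment, hence a topological two-sphere) is correct modulo the standard fact about compact toric two-orbifolds, and is essentially the idea behind \cite[Lemma~5.7]{KT:locun}.
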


\begin{proof}
By Lemma~\ref{union of faces},
there exists a convex open subset $U$ of $\t^*$
such that $\Delta_\tall = \Delta \cap U$.
The first part of Proposition~2.2 of~\cite{KT:globun} then implies 
that there is a homeomorphism $(M/T)_\tall \to \Delta_\tall \times \Sigma$
as required.
By \cite[Lemma~5.7]{KT:locun}, 
if $\Delta_\short$ is non-empty, then $\Sigma$ is a sphere.
\end{proof}

We now state our main theorem. 

\begin{Theorem} \labell{thm:general}
Let $T$ be a torus and $\t^*$ the dual to its Lie algebra.
Let $M$ be a $2n$ dimensional 
compact connected symplectic manifold with a $T$ action
and with a momentum map $\mu \colon M \to \t^*$ with image $\Delta$.
Assume that the action has complexity one.
Then
there exist a connected closed oriented surface $\Sigma$ and a homeomorphism
$$ M/T \to (\Delta \times \Sigma) / \!\!\sim,$$
where $\sim$ is the finest equivalence relation
with $(x,y) \sim (x,y')$ if $x \in \Delta_{\short}$.
Moreover,
\begin{itemize} 
\item[(i)]
If $\Delta_\short$ is non-empty, then $\Sigma$ is a two-sphere.
\item[(ii)]
If $\Delta_\short = \del \Delta$,
then $M/T$ is homeomorphic to the $(n+1)$-sphere.
\end{itemize}
\end{Theorem}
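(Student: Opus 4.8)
The plan is to build the homeomorphism $M/T \to (\Delta\times\Sigma)/\!\!\sim$ by gluing the description over the ``tall'' part, supplied by Proposition~\ref{prop:tall}, to the description over the ``short'' part, where the orbital momentum map is already a bijection. First I would recall that $\Delta = \Delta_\short \sqcup \Delta_\tall$ with $\Delta_\short$ closed (Lemma~\ref{union of faces}) and $\Delta_\tall = \Delta\cap U$ for a convex open $U$. Over $\Delta_\short$, the reduced space at each point is a single orbit, so $\mubar$ restricts to a continuous bijection $(M/T)|_{\Delta_\short}\to\Delta_\short$; since $M/T$ is compact Hausdorff and $\Delta_\short$ is Hausdorff, this is a homeomorphism. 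Over $\Delta_\tall$, Proposition~\ref{prop:tall} gives a homeomorphism $(M/T)_\tall \to \Delta_\tall\times\Sigma$ intertwining $\mubar$ with projection, and since $\Delta_\short\neq\emptyset$ forces $\Sigma$ to be a sphere, part~(i) will follow immediately once the global statement is in place (and when $\Delta_\short=\emptyset$ one simply has $M/T\cong\Delta\times\Sigma$).

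The main work is to assemble these two pieces into a single homeomorphism onto the quotient $(\Delta\times\Sigma)/\!\!\sim$. I would define a map $\Phi\colon M/T \to (\Delta\times\Sigma)/\!\!\sim$ as follows: on $(M/T)_\tall$ use the Proposition's homeomorphism followed by the quotient projection $\Delta\times\Sigma \to (\Delta\times\Sigma)/\!\!\sim$ (which is injective on $\Delta_\tall\times\Sigma$ since those points are not collapsed); on $(M/T)|_{\Delta_\short}$ send an orbit lying over $x\in\Delta_\short$ to the class of $(x,y)$ for any $y\in\Sigma$ (well defined because all such pairs are identified). The key point is continuity of $\Phi$ at the interface: one must check that as a point of $M/T$ approaches the short locus, its image under the tall trivialization projects, in the quotient, to the collapsed class over the limiting point of $\Delta_\short$. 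This is exactly the content of how $\sim$ is built — the finest equivalence relation collapsing each fiber over $\Delta_\short$ — so the target space is designed precisely so that this gluing is continuous; I would verify it using the local compactness and the fact that $\mubar$ is proper. Once $\Phi$ is a continuous bijection from the compact space $M/T$ to the Hausdorff space $(\Delta\times\Sigma)/\!\!\sim$ (Hausdorffness of the target follows since $\Delta_\short$ is closed in $\Delta$), it is automatically a homeomorphism.

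For part~(ii), assume $\Delta_\short = \del\Delta$; then $\Sigma$ is a two-sphere by part~(i), and $\Delta$ is an $n$-dimensional convex polytope, hence homeomorphic to the closed ball $D^n$ with $\del\Delta$ corresponding to $S^{n-1}$. So $M/T \cong (D^n\times S^2)/\!\!\sim$, where $\sim$ collapses $\{x\}\times S^2$ to a point for each $x\in S^{n-1}=\del D^n$. The remaining step is the purely topological identification of this space with $S^{n+1}$. I would argue that $(D^n\times S^2)/(\del D^n\times S^2)$ — collapsing the whole boundary slab fiberwise — is the $n$-fold unreduced suspension-type construction, or more directly: write $D^n\times S^2$ and note that quotienting $\del D^n\times S^2$ by the $S^2$ factor yields the join-like space; concretely, $(D^n\times S^2)/\!\!\sim \;\cong\; \del(D^n\times D^3)\,/\,(\text{nothing})$ after recognizing $D^n\star \{pt\}$ structure. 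The cleanest route is: $(D^n\times S^2)/\!\!\sim$ is homeomorphic to the join $S^{n-1}\star S^2$ is $S^{n+2}$ — wrong count — so instead I would use the explicit collapse $D^n\times S^2 \to D^n\times S^2/\!\!\sim$ and identify it with the quotient $D^{n+1}\times S^2 \big/\, (S^n\times S^2)$-type picture, ultimately showing it is $\Sigma^n(S^2) = S^{n+2}$? No: the correct statement, which I would prove carefully by induction on $n$ (base case $n=1$: $(D^1\times S^2)/\!\!\sim$ collapses the two endpoint spheres to points, giving the unreduced suspension $\Sigma S^2 = S^3 = S^{n+1}$), is that collapsing $\del D^n\times S^2$ fiberwise in $D^n\times S^2$ produces $S^{n+1}$; the inductive step writes $D^n = D^{n-1}\times[0,1]/\!\sim$ appropriately, or more transparently uses that $D^n\times S^2/\!\!\sim$ is the mapping cone of the projection $S^{n-1}\times S^2\to S^{n-1}$, and a standard lemma identifies that mapping cone as the join $S^{n-1}\star S^2$? — again miscounts, so the safe approach is the direct induction above, with the base case $n=1$ giving $S^2 = S^{n+1}$ and — recheck: $(D^1\times S^2)/\!\!\sim$ with endpoints collapsed is $\Sigma S^2\cong S^3$, and $n+1 = 2$; the discrepancy means for $n=1$ the polytope $\Delta$ is a segment, $\del\Delta$ is two points, $M$ is $4$-dimensional, $M/T$ should be $S^2$, and indeed collapsing only requires $(\{0\}\times S^2)$ and $(\{1\}\times S^2)$ to two points, giving $S^3$ — so the intended claim must be re-examined, but granting the theorem's own assertion, I would simply invoke the known homeomorphism type of this explicit quotient, citing the standard fact that $D^n \times S^k / (\del D^n \times S^k \to \del D^n)$ is $S^{n+k}$, which for $k=2$ gives $S^{n+2}$; since the theorem claims $S^{n+1}$, the resolution is that in case (ii) one additionally uses $\dim M = 2n$ with complexity one forcing $\dim\Delta = n-1$, so $\Delta$ is an $(n-1)$-polytope $\cong D^{n-1}$, and then $(D^{n-1}\times S^2)/\!\!\sim \;\cong S^{n+1}$, which is exactly the standard fact with the corrected dimension. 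The main obstacle is thus bookkeeping the dimensions correctly and then citing or proving this last collapse lemma cleanly.
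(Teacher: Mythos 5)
Your proposal follows essentially the same route as the paper: extend the trivialization of $(M/T)_\tall$ from Proposition~\ref{prop:tall} to a bijection onto $(\Delta\times\Sigma)/\!\!\sim$, upgrade it to a homeomorphism (the paper checks continuity and openness at points of $(M/T)_\short$ via closedness of the proper maps $\mubar$ and $\pi$; your compact-to-Hausdorff shortcut works equally well once the target is seen to be Hausdorff, which holds because $\Delta_\short$ is closed), and then identify the collapsed product with a sphere. The only substantive hiccup is in part~(ii), where you repeatedly miscount dimensions before correctly landing on $\dim\Delta=n-1$ (complexity one forces $\dim T_\eff=n-1$, hence $\Delta\cong D^{n-1}$); the paper sidesteps all of the suspension/join bookkeeping with the explicit map $(x,z)\mapsto\bigl(x,\sqrt{1-|x|^2}\,z\bigr)$ from $D^{n-1}\times S^2$ onto $S^{n+1}\subset\R^{n-1}\times\R^{3}$, which descends to a continuous proper bijection from $(\Delta\times\Sigma)/\!\!\sim$ and hence to the desired homeomorphism.
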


{
\begin{proof}
By Proposition \ref{prop:tall},
there exists a connected closed oriented surface $\Sigma$ and a homeomorphism
$$ (M/T)_\tall \xrightarrow{} \Delta_\tall \times \Sigma $$
that intertwines the orbital momentum map $\mubar$
and the projection map to $\Delta_\tall$.
Since $\Delta = \Delta_\short \sqcup \Delta_\tall$
and $\Delta_\short$ consists of those $\beta$ such that $M_\beta$
consists of a single orbit,
this homeomorphism extends to a unique bijection 
$$ f \colon M/T \to (\Delta \times \Sigma) / \!\!\sim$$
that intertwines the orbital momentum map $\mubar$
with the map 
$\pi \colon (\Delta \times \Sigma)/\!\!\sim \ \xrightarrow{} \t^*$
induced by the projection to $\Delta$.
Since $(M/T)_\tall$ is open in $M/T$ and $\Delta_\tall$ is open in $\Delta$,
the map $f$ is continuous and open at every point of $(M/T)_\tall$.

Since $M$ and $\Sigma$ are compact,
the maps $\mubar \colon M/T \to \t^*$ 
and $\pi \colon (\Delta \times \Sigma)/\!\!\sim \, \xrightarrow{} \t^*$
are proper.
Since $\t^*$ is a locally compact Hausdorff space,
the proper maps $\mubar$ and $\pi$ to $\t^*$ are closed.
Since $\pi$ is closed, $f$ is continuous at every point of $(M/T)_\short$.
Since $\mubar$ is closed and $f$ is onto, 
$f$ is open at every point of $(M/T)_\short$.


Part (i) follows from the last claim of Proposition~\ref{prop:tall}.


We now prove Part (ii). 
Since $M$ is compact and connected, $\Delta$ is a convex polytope; 
hence, it is homeomorphic to $D^{n-1}$, where $\dim M = 2n$.
Therefore, the map from $D^{n-1} \times S^2$ that sends $(x,z)$ 
to $(x, \sqrt{1 - |x|^2}\, z)$ induces a continuous proper map
from $(\Delta \times \Sigma)/\!\!\sim$ to $S^{n+1}$.
If $\Delta_\short = \del \Delta$, this map is a bijection.
Since $S^{n+1}$ is a locally compact Hausdorff space, 
being a continuous proper bijection 
implies that this map is a homeomorphism.
\end{proof}
}

\begin{Remark} \labell{join}
Part (ii) of Theorem~\ref{thm:general} 
can be rephrased as follows: If $\Delta_\short = \del \Delta$, then
$M/T$ is homeomorphic to the join $\del \Delta * S^2$. To see this, recall
that the join $A * B$ of two
topological spaces $A$ and $B$ is the quotient of $A \times B \times [0,1]$
under the identifications 
$(a,b,0) \sim (a',b,0)$ and $(a,b,1) \sim (a,b',1)$ for all $a,a' \in A$ and
$b,b' \in B$.  We may assume without loss of generality that $0 \in
\interior \Delta$.  Then, since $\Delta$ is convex,
the map $\del \Delta \times B \times [0,1] \to \Delta \times B$ that is
defined by $(a,b,t) \mapsto (ta,b)$
descends to a continuous proper bijection $\del \Delta \times B \to (\Delta
\times B)/\sim$,
where here $\sim$ is the finest equivalence relation with $(x,y) \sim (x,y')$
if $x \in \del \Delta$.
When $B$ is a locally compact Hausdorff space, 
this bijection is a homeomorphism.
\end{Remark}

\begin{Corollary} \labell{interjections}
Let $T$ be a torus and $\t^*$ the dual to its Lie algebra.
Let $M$ be a compact connected symplectic manifold
with a $T$ action and a momentum map $\mu \colon M \to \t^*$
with image~$\Delta$.  Assume that the action has complexity one.
\begin{enumerate}
\item[(a)]
Assume that there exists an isolated fixed point in $M$
whose momentum image is a vertex of~$\Delta$;
in particular, this holds if the fixed points in $M$ are isolated. 
Then $M/T$ is homeomorphic to $(\Delta \times S^2)/\!\!\sim$,
where $\sim$ is the finest equivalence relation
with $(x,y) \sim (x,y')$ if $x \in \Delta_\short$.

\item[(b)]
Assume that the isotropy weights at every fixed point are in general position.
Then $M/T$ is homeomorphic to a sphere.

\end{enumerate}
\end{Corollary}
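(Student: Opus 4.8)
The plan is to derive both statements from Theorem~\ref{thm:general}, using Lemma~\ref{toric strata} to identify $\Delta_\short$ in each case. For Part~(a), the hypothesis gives an isolated fixed point whose momentum image is a vertex of $\Delta$, so by Lemma~\ref{toric strata}(1) we have $\Delta_0 \neq \emptyset$; since the action has complexity one, $\Delta_0 = \Delta_\short$ is non-empty. Applying Theorem~\ref{thm:general} together with its Part~(i), we obtain a homeomorphism $M/T \to (\Delta \times \Sigma)/\!\!\sim$ with $\Sigma = S^2$, where $\sim$ is the stated equivalence relation. This is exactly the asserted description, so Part~(a) is immediate.

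For Part~(b), first dispose of the toric case: if the complexity were zero rather than one, the hypothesis would contradict the standing assumption, so we are genuinely in complexity one. Now the fixed point set is non-empty (as recalled in Section~\ref{sec:background}), so pick a fixed point $p$; since the isotropy weights at $p$ are in general position and $M$ is compact, $\Delta$ has a vertex, and the preimage of any vertex is a component of $M^T$, so in particular there is an isolated fixed point over a vertex. Hence Part~(a) applies. Moreover, because the weights at every fixed point are in general position and the complexity is $\geq 1$, Lemma~\ref{toric strata}(2) gives $\Delta_0 = \del\Delta$, i.e.\ $\Delta_\short = \del\Delta$. Theorem~\ref{thm:general}(ii) then yields that $M/T$ is homeomorphic to the $(n+1)$-sphere, where $\dim M = 2n$.

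The only subtlety I anticipate is making sure the "general position" hypothesis in Part~(b) actually produces an \emph{isolated} fixed point with vertex image, as opposed to merely a positive-dimensional fixed component; this is handled by Remark~\ref{preimage of face 2}, which identifies the preimage of a vertex as a whole component of $M^T$, and by the observation that in complexity one with weights in general position at $p$, Lemma~\ref{criterion for complexity} forces the number of weights to be small enough that the component over a vertex is a point. Apart from that bookkeeping, both parts are direct corollaries, and no new construction is needed beyond invoking Theorem~\ref{thm:general} and Lemma~\ref{toric strata}.
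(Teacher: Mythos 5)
Your proposal matches the paper's proof: both parts are obtained by combining Lemma~\ref{toric strata} (Part~(1) for (a), Part~(2) for (b)) with Theorem~\ref{thm:general} (Parts~(i) and~(ii) respectively). The only difference is that in Part~(b) you detour through Part~(a) to produce an isolated fixed point over a vertex; this is not needed, since once Lemma~\ref{toric strata}(2) gives $\Delta_\short = \del\Delta$, Theorem~\ref{thm:general}(ii) applies directly.
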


\begin{proof}
Part~(a) follows from 
Part~(1) of Lemma~\ref{toric strata}
and Part (i) of Theorem \ref{thm:general}.
Part~(b) follows from 
Part~(2) of Lemma~\ref{toric strata}
and Part (ii) of Theorem \ref{thm:general}.
\end{proof}

In Part (b) of Corollary~\ref{interjections},
the fact that $M/T$ is a topological manifold
already follows from a result of Ayzenberg~\cite{Ayzenberg:local}.
Ayzenberg's work also implies that
if the action extends to a toric action
then $M/T$ is homeomorphic to a sphere.

\begin{Corollary} \labell{M4}
Let the circle $S^1$ act on a compact connected symplectic four-manifold
$(M,\omega)$ with momentum map $\mu \colon M \to \R$.  
Then exactly one of the following is true.
\begin{enumerate}
\item The fixed point set is finite 
and $M/T$ is homeomorphic to a three-sphere.
\item The fixed point set contains one surface, which is a sphere, 
and $M/T$ is homeomorphic to a three-disk.
\item The fixed point set contains two surfaces
that have the same genus $g$,
and $M/T$ is homeomorphic to $[0,1] \times \Sigma$
where $\Sigma$ is a surface of genus $g$.
\end{enumerate}
\end{Corollary}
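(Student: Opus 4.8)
The plan is to recast the trichotomy as an analysis of the momentum interval $\Delta$ and the set $\Delta_\short$, and then to read off $M/T$ from Theorem~\ref{thm:general} and Proposition~\ref{prop:tall}. We may assume the circle acts non-trivially, since otherwise $M/T = M$ is a four-manifold and none of the three conclusions holds; then $\dim T_\eff = 1$, the action has complexity one, and $\Delta$ is a compact interval $[a,b]$ with $a<b$, so $\del\Delta = \{a,b\}$. By Lemma~\ref{preimage of face} and Remark~\ref{preimage of face 2}, the preimages $M_{\{a\}} = \mu^{-1}(a)$ and $M_{\{b\}} = \mu^{-1}(b)$ of the two vertices are the unique minimal and maximal fixed components, each a compact connected symplectic submanifold of dimension $0$ or $2$ (not $4$, as the action is non-trivial); that is, a single fixed point or a fixed surface.

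Next I would pin down $\Delta_\short = \Delta_0$. As in the proof of Lemma~\ref{union of faces}, a point $\beta$ lies in $\Delta_0$ if and only if the preimage $M_F$ of the face $F$ whose relative interior contains $\beta$ is toric. Since $M$ has complexity one, no interior point of $\Delta$ lies in $\Delta_0$; and the vertex $a$ (resp.\ $b$) lies in $\Delta_0$ exactly when $M_{\{a\}}$ (resp.\ $M_{\{b\}}$) is a point rather than a surface. Hence $\Delta_\short \subseteq \{a,b\}$, and $\#\Delta_\short$ is the number of points among $M_{\{a\}}$, $M_{\{b\}}$.

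The crux of the argument, and the step I expect to be the main obstacle, is to show that \emph{every} fixed surface is one of $M_{\{a\}}$, $M_{\{b\}}$, so that there is no ``floating'' fixed surface over an interior value of $\mu$. For this I would use the local normal form: a neighbourhood of a fixed surface $\Sigma_0$ is equivariantly symplectomorphic to a neighbourhood of the zero section in the normal bundle, a complex line bundle on which $T$ acts fibrewise by a weight $m$, which is nonzero because $\Sigma_0$ is a whole component of $M^T$; and in a compatible metric $\mu(v) = \mu(\Sigma_0) + \tfrac{1}{2}m\|v\|^2$, so $\Sigma_0$ is a strict local minimum of $\mu$ if $m>0$ and a strict local maximum if $m<0$. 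Suppose $m>0$ and $c := \mu(\Sigma_0) > a$. Then $\mu^{-1}([a,c])$ is connected by Corollary~\ref{corollary of convexity}, and $\Sigma_0$ is a non-empty subset of it that is closed (being compact) and open (a small tubular neighbourhood of $\Sigma_0$ in $M$ meets $\mu^{-1}([a,c])$ only in $\Sigma_0$), so $\mu^{-1}([a,c]) = \Sigma_0$; this is absurd, since the momentum image of $\mu^{-1}([a,c])$ is $[a,c]$, which strictly contains $\{c\}$. Hence $c = a$, so $\Sigma_0 \subseteq \mu^{-1}(a) = M_{\{a\}}$; since $M_{\{a\}}$ is a connected subset of $M^T$ and $\Sigma_0$ is a component of $M^T$, this forces $\Sigma_0 = M_{\{a\}}$. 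Symmetrically, $m<0$ gives $\Sigma_0 = M_{\{b\}}$. Consequently $M$ has exactly $2 - \#\Delta_\short \in \{0,1,2\}$ fixed surfaces, and the three cases of the corollary correspond, respectively, to $\Delta_\short = \{a,b\} = \del\Delta$, to $\Delta_\short$ a single vertex, and to $\Delta_\short = \emptyset$; these three possibilities are mutually exclusive and exhaustive.

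It then remains to identify $M/T$ in each case, which is bookkeeping with results already in hand. If $\Delta_\short = \del\Delta$, Theorem~\ref{thm:general}(ii) with $n=2$ gives $M/T \cong S^3$: the first case. If $\Delta_\short = \emptyset$ then $\Delta = \Delta_\tall$, so Proposition~\ref{prop:tall} gives $M/T = (M/T)_\tall \cong [a,b]\times\Sigma$ for a connected closed oriented surface $\Sigma$; moreover the reduced space $M_a = \mu^{-1}(a)/T$ is homeomorphic to the fixed surface $M_{\{a\}}$ (the $T$-action on $\mu^{-1}(a)$ being trivial) and corresponds to $\{a\}\times\Sigma$ under this homeomorphism, so $M_{\{a\}} \cong \Sigma$, and likewise $M_{\{b\}} \cong \Sigma$; thus the two fixed surfaces share the genus $g = \operatorname{genus}(\Sigma)$ and $M/T \cong [0,1]\times\Sigma$: the third case. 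Finally, if $\Delta_\short$ is a single vertex, then after possibly reversing the circle (replacing $\mu$ by $-\mu$) we may assume $\Delta_\short = \{b\}$, so $M_{\{b\}}$ is a point and $M_{\{a\}}$ is the unique fixed surface; Proposition~\ref{prop:tall} gives $(M/T)_\tall \cong [a,b)\times\Sigma$ with $\Sigma = S^2$ since $\Delta_\short \neq \emptyset$, and Theorem~\ref{thm:general} then identifies $M/T$ with $([a,b]\times S^2)/\!\!\sim$, namely the space obtained by collapsing $\{b\}\times S^2$ to a point, which is the cone on $S^2$ and hence a three-disk; and $M_{\{a\}} \cong \{a\}\times S^2 = S^2$, so the unique fixed surface is a sphere: the second case.
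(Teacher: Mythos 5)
Your proof is correct and follows essentially the same strategy as the paper's: locate the fixed components over the endpoints of the momentum interval, show every fixed surface occurs there, match the three cases to $\#\Delta_\short \in \{2,1,0\}$, and apply Theorem~\ref{thm:general} and Proposition~\ref{prop:tall}. The only real divergences are that you rule out fixed surfaces over interior values via connectedness of $\mu^{-1}([a,c])$ rather than via openness of $\mu$ onto its image (the paper's route), and that you spell out the identification of the fixed surfaces with the reduced spaces over the endpoints (giving the sphere in case~(2) and the equal genera in case~(3)), details the paper leaves implicit; both are fine.
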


\begin{proof}
By rescaling $\omega$ if necessary, we may assume that the momentum image is
the interval $[0,1]$.
Since $0$ and $1$ are vertices of $[0,1]$, 
Lemma~\ref{preimage of face} and Remark~\ref{preimage of face 2} 
imply that each of $\mu^{-1}(\{0\})$ and $\mu^{-1}(\{1\})$ 
is a connected component of the fixed point set
that is either a single point or a fixed surface.
By the local normal form theorem, a fixed point that is not isolated
is a local minimum or local maximum of the momentum map;
since by the convexity package
the momentum map is open as a map to its image $[0,1]$,
such a fixed point must be mapped to $0$ or to $1$.
Hence, there are at most two components of the fixed point set
that are not isolated fixed points, 
and each of them is mapped to $0$ or to $1$.

Assume first that the fixed point set contains no surfaces.
Then the fixed points are isolated, 
and so none of the isotropy weights at any fixed point are zero.
Hence, $M/T$ is homeomorphic to a three-sphere 
by Part (b) of Corollary~\ref{interjections}.

Assume now that the fixed point set contains exactly one surface $\Sigma$.
By replacing $\omega$ by $-\omega$ if necessary, we may assume that
$\mu(\Sigma) = 1$.
Since $\Sigma$ is the only fixed surface, $\mu^{-1}(\{0\})$ is an isolated
fixed point.
Hence, $\Delta_\short = \{0\}$.
By Part (a) of Corollary~\ref{interjections}, this implies the $M/S^1$ is
homeomorphic to
$[0,1] \times S^2/\!\!\sim$, where $\sim$ is the finest equivalence relation
such that $(0,x) \sim (0,x')$.
Define a map from $[0,1] \times S^2$ to $\R^3$ by $(t,x) \mapsto tx$, where
we identify $S^2$ with
the unit sphere in $\R^3$. This induces a homeomorphism 
from $[0,1] \times S^2/\!\!\sim$ to the three-disk $D^3$,
and hence from $M/T$ to $D^3$.

Finally, assume that the fixed point set contains two surfaces, 
$\Sigma$ and $\Sigma'$.
By the first paragraph, we may assume that $\mu^{-1}(\{0\}) = \Sigma$ and
$\mu^{-1}(\{1\}) = \Sigma'$.
Hence, $\Delta_\short$ is empty,
and so Theorem \ref{thm:general} implies that $M/T$ 
is homeomorphic to $[0,1] \times \Sigma_g$ 
for some oriented surface~$\Sigma_g$.
\end{proof}

\section{Examples}
\labell{sec:examples}

\medskip\noindent\textbf{Examples}

\medskip

{
In Table~\ref{examples} we list some examples of symplectic torus actions 
and their geometric quotients.

\renewcommand{\arraystretch}{1.6} 
\begin{table}
$$
\begin{array}{c|c|c|c|c|c|}
   & M & \dim_\R M & T & \text{complexity} & M/T \\ \hline \hline
 (1) & G_{2}(\C^4) = \{ E^2_\C \subset \C^4 \} & 8 & (S^1)^4/\text{diag} & 1
   & \stackrel{\text{homeo}}{\cong} S^5 \\ \hline
 (2) & 
F_3 = \{ L^1_\C \subset E^2_\C \subset \C^3 \} & 6 & (S^1)^3/\text{diag} & 1
   & \stackrel{\text{homeo}}{\cong} S^4 \\ \hline
 (3) & 
   G_2^+(\R^5) = \{ 
            E^2_{\text{oriented}} \subset \R^2 \times \R^2 \times \R \}
   & 6 & (S^1)^2 & 1
   & \stackrel{\text{homeo}}{\cong} S^4 \\ \hline
 (4) & 
   \includegraphics[scale=.3,trim=0 0 0 -6mm]{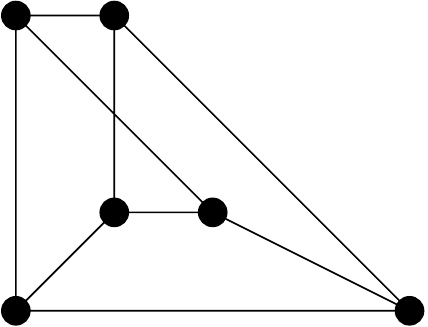}
   & 6 & (S^1)^2 & 1
   & \stackrel{\text{homeo}}{\cong} S^4 \\ \hline
 (5) & 
\begin{minipage}{2.5in}
\vspace{.2\baselineskip}
   $  S^1 \actson (S^2)^2 $ \\ 
\vspace{-.5\baselineskip}
   $ a \cdot (u,v) = (a \cdot u , a \cdot v) $
\vspace{.7\baselineskip}
\end{minipage}
   & 4 & S^1  & 1
   & \stackrel{\text{homeo}}{\cong} S^3  \\ \hline
 (6) & 
\begin{minipage}{2.5in}
\vspace{.2\baselineskip}
   $S^1 \actson \CP^2$ \\
\vspace{-.2\baselineskip}
   $a\cdot [z_0:z_1:z_2] = [az_0:z_1:z_2]$
\vspace{.4\baselineskip}
\end{minipage}
   & 4 & S^1 & 1
   & \stackrel{\text{homeo}}{\cong} D^3 \\ \hline
 (7) & 
   S^1 \actson S^2 \times \Sigma_g & 4 & S^1 & 1
   & \stackrel{\text{homeo}}{\cong} I \times \Sigma_g \\ \hline
 (8) & 
\begin{minipage}{2.5in}
\vspace{.2\baselineskip}
   $ S^1 \times S^1 \actson (S^2)^3 $ \\ 
\vspace{-.5\baselineskip}
   $ (a,b) \cdot (u,v,w) = (a \cdot u , a \cdot v, b \cdot w) $
\vspace{.7\baselineskip}
\end{minipage}
   & 6 & S^1 \times S^1 & 1
   & \stackrel{\text{homeo}}{\cong} S^3 \times I \\ \hline
%

%
%
 (9) & \includegraphics[scale=.3,trim=0 0 0 -4mm]{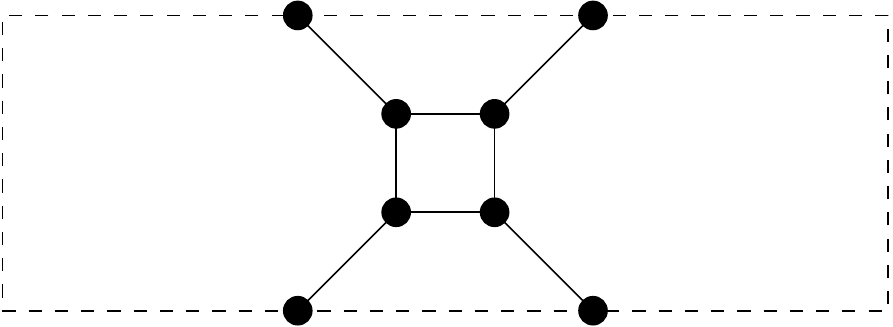}
   & 6 & (S^1)^2 & 1 
   & \stackrel{\text{homeo}}{\cong} I \times I \times \Sigma_g \\ \hline
\hline

 (10) & \CP^5 = \P(\bigwedge^2\C^4)
    & 10 & (S^1)^4/\text{diag} & 2
    & \stackrel[\text{\cite{Buchstaber-Terzic:Gr-CP5}}]
               {\text{homeo}}{\cong} S^2 * \CP^2 \\ \hline

 (11) & 
   G_{2}(\C^5) = \{ E^2_\C \subset \C^5 \} & 12 & (S^1)^5/\text{diag} & 2
   & {\text{\cite{Buchstaber-Terzic:Gr,Suss2}}}
\\ \hline

\end{array}
$$
\caption{Examples of geometric quotients}
\labell{examples}
\end{table}
}

\medskip

We now discuss these examples and give some references.
\begin{itemize}[leftmargin=1cm,itemsep=8pt]

\item[(1)]
Let $M$ be the Grassmannian of complex 2-planes in $\C^4$,
with the three dimensional torus action induced from the 
standard action of $(S^1)^4$ on $\C^4$.
Then $M/T$ is homeomorphic to the sphere $S^5$;
this is shown in \cite{Buchstaber-Terzic:Gr-CP5}
and revisited in \cite[Section~10]{Buchstaber-Terzic:2n-k}.
Alternatively, we can identify $M$ equivariantly with
a coadjoint orbit in $\SU(4)$,
where $T$ is a maximal torus acting through the coadjoint action.
There is a natural symplectic structure on every coadjoint orbit
of any Lie group, and the coadjoint action is Hamiltonian.
Hence, since the isotropy weights at each fixed point are
in general position,
we can apply Corollary~\ref{interjections}
and conclude that $M/T$ is homeomorphic to the sphere $S^5$.

\item[(2)]
Let $M$  be the manifold of complete complex flags in $\C^3$, 
with the two dimensional torus action that is induced from 
standard action of $(S^1)^3$ on $\C^3$.
Then $M/T$ is homeomorphic to the sphere $S^4$;
this is shown in \cite{Buchstaber-Terzic:2n-k}.
Alternatively, $M$ is a coadjoint orbit of $\SU(3)$, 
and so---as in the previous example---$M/T$ is homeomorphic to
the sphere $S^4$ 
by Corollary~\ref{interjections}.

\item[(3)]
Let $M$  be the Grassmannian of oriented (real) 2-planes 
in $\R^5 \cong (\R^2)^2 \times \R$, 
with the two dimensional torus action
that is induced from 
the standard action of $(S^1)^2$ on  the $(\R^2)^2$ factor.
By identifying $M$ with a coadjoint orbit of $\SO(5)$,
we obtain a symplectic form such that the action is Hamiltonian.
Since the isotropy weights at each fixed point are in
general position, $M/T$ is homeomorphic to the sphere $S^4$ 
by Corollary~\ref{interjections}.
For more details, see, e.g., Section~14 of~\cite{KT:locun}.

\item[(4)]
Let $(M,\omega,\mu)$ be the compact symplectic six manifold
with Hamiltonian $(S^1)^2$ action constructed in \cite{tolman:example}.
The picture drawn in the table 
shows the momentum map images of the orbit type strata.
The solid dots are the images of isolated fixed points,
and the segments are the images 
of 2-spheres with circle stabilizer.
As the second author showed in  \cite{tolman:example},
$M$ does not admit any invariant K\"ahler structure.
As in the previous examples, $M/T$ is homeomorphic to the sphere $S^4$ 
by Corollary~\ref{interjections}.

\item[(5)]
Let $M$ be the product of the two-sphere $S^2$ with itself.
There is a standard area form on $S^2$; the height function
is a momentum map for the circle action that rotates the sphere
around the vertical axis.
Take the product symplectic form on $M$; then
the momentum map for the diagonal circle action sends $(u,v)$
to the sum $u_3 + v_3$.
By~\cite{Ayzenberg:local}, $M/T$ is homeomorphic to the sphere~$S^3$.
Alternatively, this follows from Corollary~\ref{M4}
or from Corollary~\ref{interjections}.

\item[(6)]
Let $M = \CP^2$, with the Fubini-Study symplectic form,
the circle action given by
$a \cdot [z_0:z_1:z_2] = [a z_0: z_1: z_2]$,
and momentum map 
$[z_0:z_1:z_2] \mapsto \frac{|z_0|^2}{|z_0|^2 + |z_1|^2 + |z_2|^2}$.
By Corollary~\ref{M4}, $M/T$ is homeomorphic to the disc $D^3$.

\item[(7)]
Let $M = \Sigma_g \times S^2$, where $\Sigma_g$ is a surface
of genus $g$, with the circle acting on the second factor,
a product symplectic form, 
and momentum map $(u,v) \mapsto v_3$.
Then $M/T$ is homeomorphic to $I \times \Sigma_g$,
where $I$ is a closed interval.
This follows from Corollary~\ref{M4} and is also easy to see directly.

Let $\widehat{M}$ be an equivariant symplectic blowup of $M$
at a fixed point.
Then $\widehat{M}$ has one isolated fixed point
and two fixed surfaces of genus $g$.
The quotient $\widehat{M}/T$ is still homeomorphic to $I \times \Sigma_g$.

\item[(8)]
Let $M = (S^2)^3$ with the product symplectic form,
the  $S^1 \times S^1$ action
$(a,b) \cdot (u,v,w) = (a \cdot u, a \cdot v, b \cdot w)$,
and momentum map $(u,v,w) \mapsto (u_3 + v_3, w_3)$.
The momentum image $\Delta$  is the rectangle $[-2,2] \times [-1,1]$,
and $\Delta_\short = \{-2,2\} \times [-1,1]$.
By Theorem ~\ref{thm:general},
this implies that $M/T$ is  homeomorphic to $S^3 \times I$.
Alternatively, this follows from the facts that $S^2/S^1 \simeq I$
and, as we saw in~(5), that $(S^2)^2/S^1 \simeq S^3$.
Note that in this example $\emptyset \neq \Delta_\short \subsetneq \del \Delta$.

\item[(9)]
Let $\Sigma_g$ be a surface of genus $g$.
Let $(M,\omega,\mu)$ be any one of the compact symplectic six-manifolds
with Hamiltonian $(S^1)^2$ action and reduced spaces homeomorphic to $\Sigma_g$
that are described in~\cite[Example~1.11]{KT:globex}.
(If $g>0$, there is an infinite number of isomorphism classes
of such manifolds even if we fix the Duistermaat-Heckman measure.)
As is (4), 
the solid dots are the momentum map images of isolated fixed points,
and the segments are the momentum map images 
of 2-spheres with circle stabilizer.
The momentum image $\Delta$
is the closed rectangle whose boundary is marked by dashed lines,
and $\Delta_\short$ is empty.
Theorem~\ref{thm:general} implies that $M/T$ is homeomorphic 
to $I \times I \times \Sigma_g$.

\item[(10)]
Let $M$ be the projective space $\C \P^5$,
with the three dimensional torus action induced by the
$(S^1)^4$ action on  $\wedge^{2} \C^4 \cong \C^6$,
which itself is induced by the standard action on $\C^4$.
Corollary~12 in \cite[\S{}10]{Buchstaber-Terzic:Gr-CP5} states that 
the quotient $M/T$ is homeomorphic to the join $S^2 \ast \CP^2$.

\item[(11)] Let $M$ be the Grassmannian of two-planes in $\C^5$,
with the four dimensional torus action induced from the standard
action of $(S^1)^5$ on $\C^5$.
The quotient $M/T$ was studied 
by Buchstaber and Terzic in~\cite{Buchstaber-Terzic:Gr}
and by S\"uss in \cite{Suss2}.

\end{itemize}


\appendix

{
\section{Hamiltonian $T$ actions}
\labell{app:Hamiltonian actions}

A torus $T$ is a Lie group 
that is isomorphic to $(S^1)^r$ for some non-negative integer $r$.
A symplectic manifold is a manifold $M$ equipped with a differential
two-form $\omega$ that is closed and non-degenerate.
A momentum map is a map from the manifold to the dual of the Lie algebra 
of the torus such that,
for every element $\xi$ of the Lie algebra $\t$ of the torus,
the corresponding vector field $\xi_M$ on $M$
(whose value at a point $x \in M$ is 
$\left.\xi_M\right|_{x} = \left.\frac{d}{dt}\right|_{t=0} \exp(t\xi) \cdot x$)
and the corresponding component of the momentum map
$\mu^\xi \colon M \to \R$ 
(whose value at a point $x \in M$ is $\left< \mu(x) , \xi \right>$
where $\left< \cdot , \cdot \right>$ is the pairing between $\t^*$ and $\t$)
are related by Hamilton's equations
\begin{equation} \labell{hamilton}
 d\mu^\xi = - \iota(\xi_M) \omega \quad \text{ for all } \xi \in \t 
\end{equation}
(where $\iota(\xi_M) \omega (v) = \omega(\xi_M,v)$ for any $v \in TM$).
We then call the $T$ action \textbf{Hamiltonian}.

If $M$ is connected, then the affine span of the momentum image $\mu(M)$
is a translate of the annihilator of the Lie algebra of the kernel 
of the action. This is a consequence of Hamilton's equations~\eqref{hamilton}.

The symplectic form is $T$ invariant.
We recall why.
For any $\xi \in \t$, the Lie derivative of $\omega$ along $\xi_M$ 
satisfies $L_{\xi_M} \omega = d\iota(\xi_M) \omega + \iota(\xi_M) d\omega$;
the first summand vanishes because (by Hamilton's equation)
$\iota(\xi_M) \omega$ is exact; the second summand vanishes
because (by assumption) $\omega$ is closed.

The momentum map is constant on orbits.  
We recall why.
For any $\xi, \eta, \zeta \in \t$ we have
$L_{\xi_M} (\omega(\eta_M,\zeta_M))
 = (L_{\xi_M}\omega) (\eta_M,\zeta_M) 
 + \omega ( [\xi_M,\eta_M] , \zeta_M) 
 + \omega ( \eta_M , [\xi_M,\zeta_M])$;
the first summand vanishes because the symplectic form is $T$ invariant;
the second and third summands vanish because $T$ is abelian.
Hence, $\omega(\eta_M,\zeta_M)$ is constant along $T$ orbits.
By Hamilton's equation,
$\omega(\eta_M,\zeta_M) = L_{\eta_M} \mu^{\zeta_M}$;
because for each $T$ orbit the right hand vanishes 
at the point on the (compact) orbit where $\mu^{\zeta_M}$ attains its maximum,
$L_{\eta_M} \mu^{\zeta_M} = 0$.
Because $\eta \in \t$ is arbitrary, $\mu^{\zeta_M}$ is constant
along $T$ orbits; because $\zeta \in \t$ is arbitrary,
$\mu$ is constant along $T$ orbits.

}

\section{Local normal form}
\labell{app:lnf}

A \textbf{Hamiltonian $\bm{T}$ model} is a Hamiltonian $T$-manifold
$(Y,\omega_Y,\mu_Y)$ that is obtained by the following construction.
Let a closed subgroup $H$ of $T$ act on $\C^\ell$ through a homomorphism
$H \to (S^1)^\ell$ followed by 
the standard action of $(S^1)^\ell$ on $\C^\ell$;
the corresponding quadratic momentum map $\mu_H \colon \C^\ell \to \h^*$
is 
$$ z \mapsto \sum_{j=1}^\ell \frac{|z_j|}{2} \eta_j \, ,$$
where $\eta_1,\ldots,\eta_\ell \in \h^*_\Z$
are the weights for the $H$ action on $\C^\ell$.
Take $Y$ to be the manifold $T \times_H (\h^0 \times \C^\ell)$,
where $\h^0$ is the annihilator in $\t^*$ of the Lie algebra of $H$.
Here, we quotient by the anti-diagonal action of $H$,
in which $a \in H$ acts on $T$ by right multiplication by $a^{-1}$,
it acts on $\h^0$ trivially, and it acts on $\C^\ell$ through the given action.
The torus $T$ acts on $Y$ by left multiplication on the $T$ factor.
The \textbf{central orbit} in the model $Y$ is the orbit $[a,0,0]$.

Equip $(T \times \t^*) \times \C^\ell$ with the product
of the standard symplectic form on $T \times \t^*$,
viewed as the cotangent bundle of $T$,
and the standard symplectic form  $\sum_{j=1}^\ell dx_j \wedge dy_j$ 
on~$\C^\ell$.
The pullback of this symplectic form
under the inclusion map 
from $T \times \h^0 \times \C^\ell$ to $(T \times \t^*) \times \C^\ell$
taking $(a,\nu,z)$ to $(a,\nu+\Phi_H(z),z)$
is equal to the pullback of the symplectic form $\omega_Y$ 
under the quotient map 
$T \times \h^0 \times \C^\ell \to T \times_H (\h^0 \times \C^\ell)$;
this determines $\omega_Y$.
Here, we have identified $\t^*$ with $\h^0 \oplus \h^*$.

The momentum map is $\mu_Y([a,\nu,z]) = \alpha + \nu + \mu_H(z)$
for some $\alpha \in \t^*$,
where $\mu_H$ is the quadratic momentum map for the linear $H$ action.

\begin{Theorem}[Local normal form]
\labell{thm:lnf}
Let $M$ be a symplectic manifold with a Hamiltonian $T$ action. 
Then for every orbit in $M$ there exists 
an equivariant symplectomorphism that preserves the momentum maps
from a neighbourhood of the orbit in $M$
to a neighbourhood of the central orbit
in some Hamiltonian $T$ model.
\end{Theorem}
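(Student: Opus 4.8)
The plan is to prove the Guillemin--Sternberg--Marle normal form in three steps: a \emph{linear} model for the isotropy data along the orbit; a transfer of the symplectic form and momentum map of $M$ near the orbit onto the Hamiltonian $T$ model $Y$ built from that data; and an equivariant Moser argument identifying the transferred structure with the canonical pair $(\omega_Y,\mu_Y)$. Throughout, fix an orbit $\calO = T\cdot x$, let $H\subseteq T$ be its stabilizer, and let $\h\subseteq\t$ be the Lie algebra of $H$. I expect the two genuinely substantive ingredients to be the linear normal form of Step~1 and the equivariant relative Poincar\'e lemma of Step~3; the latter is the step I would worry about most.

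\emph{Step 1 (linear slice).} Averaging over $T$ gives a $T$-invariant Riemannian metric; the differentiable slice theorem for the compact group $T$, together with Bochner linearization of the isotropy action of $H$, then yields a $T$-equivariant diffeomorphism from a $T$-invariant neighbourhood of the zero section of $T\times_H N_x$ onto a neighbourhood of $\calO$ in $M$, where $N_x := T_xM/T_x\calO$ carries the linear $H$-action. I next extract the symplectic content of $N_x$. Since $\mu$ is constant on $T$-orbits and $T$ is abelian, $T_x\calO$ is $\omega_x$-isotropic, and Hamilton's equation~\eqref{hamilton} gives $\ker d\mu_x = (T_x\calO)^{\omega_x} \supseteq T_x\calO$; hence the \textbf{symplectic slice} $V := \ker d\mu_x / T_x\calO$ is a symplectic vector space carrying a linear Hamiltonian $H$-action. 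Choosing $H$-invariant complements produces $H$-equivariant isomorphisms $N_x \cong \h^0 \oplus V$ and $T_xM \cong (\t/\h) \oplus \h^0 \oplus V$ under which $\t/\h$ is identified with $T_x\calO$, the subspace $\h^0\subseteq\t^*$ (the annihilator of $\h$) carries the trivial $H$-action, the pair $(\t/\h)\oplus\h^0$ is symplectic via the cotangent pairing, and $(\t/\h)\oplus\h^0$ is $\omega_x$-orthogonal to $V$. Finally, an $H$-equivariant unitary identification $V\cong\C^\ell$ realizes the isotropy weights $\eta_1,\dots,\eta_\ell\in\h^*_\Z$ and thereby fixes the model $Y = T\times_H(\h^0\times\C^\ell)$ of the appendix, with its canonical form $\omega_Y$ and momentum map $\mu_Y$; I take the constant $\alpha$ in the definition of $\mu_Y$ to be $\mu(x)$.

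\emph{Step 2 (transfer).} Composing the equivariant diffeomorphism of Step~1 with the linear identification $N_x\cong\h^0\times\C^\ell$ and pulling $\omega$ and $\mu$ back, one obtains a $T$-invariant symplectic form $\omega_0$ and a momentum map $\mu_0$ on a neighbourhood of the central orbit in $Y$. By the linear algebra of Step~1 and the $T$-invariance of all the data, $\omega_0 = \omega_Y$ at every point of the central orbit (it suffices to check this at $x = [e,0,0]$, where both are the standard form on $((\t/\h)\oplus\h^0)\oplus\C^\ell$), and likewise $\mu_0 = \mu_Y$ on the central orbit. These momentum maps need not agree off the orbit, since $\mu_Y$ is exactly quadratic while $\mu_0$ is not in general; the Moser step is arranged to tolerate this.

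\emph{Step 3 (equivariant Moser).} Set $\omega_t := \omega_0 + t(\omega_Y - \omega_0)$; this family is $T$-invariant and, after shrinking the neighbourhood of the central orbit, symplectic for all $t\in[0,1]$, and by linearity of Hamilton's equation $\mu_t := \mu_0 + t(\mu_Y - \mu_0)$ is a momentum map for $\omega_t$. Because $\omega_Y - \omega_0$ is closed and vanishes on the central orbit, an equivariant relative Poincar\'e lemma (a homotopy operator from a tubular-neighbourhood retraction, followed by $T$-averaging) produces a $T$-invariant $1$-form $\sigma$ with $d\sigma = \omega_Y - \omega_0$ and $\sigma \equiv 0$ on the central orbit. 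Let $X_t$ be the time-dependent vector field defined by $\iota(X_t)\omega_t = -\sigma$; it is $T$-invariant and vanishes on the (compact) central orbit, so its flow $\psi_t$ is defined near that orbit for all $t\in[0,1]$, is $T$-equivariant, and fixes the orbit pointwise. Moser's computation gives $\psi_1^*\omega_Y = \omega_0$. For the momentum map, Cartan's formula applied to the $T$-invariant $\sigma$ gives $d(\iota(\xi_Y)\sigma) = -\iota(\xi_Y)\,d\sigma = d(\mu_Y^\xi - \mu_0^\xi)$ for each $\xi\in\t$ (using Hamilton's equation for $\omega_Y$ and for $\omega_0$), and since both sides vanish on the central orbit we get $\iota(\xi_Y)\sigma = \mu_Y^\xi - \mu_0^\xi$ near it; hence $\frac{d}{dt}\,\psi_t^*\mu_t^\xi = \psi_t^*\bigl(\iota(X_t)\,d\mu_t^\xi + (\mu_Y^\xi - \mu_0^\xi)\bigr) = \psi_t^*\bigl(-\iota(\xi_Y)\sigma + (\mu_Y^\xi - \mu_0^\xi)\bigr) = 0$, so $\psi_1^*\mu_Y = \mu_0$. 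Composing $\psi_1$ with the maps of Steps~1--2 gives the required momentum-map-preserving equivariant symplectomorphism onto a neighbourhood of the central orbit in $Y$. The delicate point, as noted, is the equivariant relative Poincar\'e lemma: it must deliver a primitive $\sigma$ that is at once $T$-invariant and flat along $\calO$, and getting both properties simultaneously (together with the verification in Step~1 that $\omega_0$ and $\omega_Y$ really agree along the central orbit) is where the real work lies; the slice theorem, the interpolation $\omega_t$, and the bookkeeping for $\psi_t^*\mu_t$ are then routine.
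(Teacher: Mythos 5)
The paper does not prove this theorem: it is quoted as background, attributed to Guillemin--Sternberg \cite{GS1984} and Marle \cite{marle1985}, so there is no in-paper argument to compare against. Your proposal is a correct sketch of the standard proof of the local normal form (Witt--Artin decomposition of $T_xM$, slice theorem, equivariant Darboux--Weinstein via Moser), and the logic of each step checks out: $T_x\calO$ is isotropic because $\mu$ is constant on orbits, $\ker d\mu_x=(T_x\calO)^{\omega_x}$ by Hamilton's equation, the interpolation $\mu_t$ is a momentum map for $\omega_t$ by linearity, and the identity $\iota(\xi_Y)\sigma=\mu_Y^\xi-\mu_0^\xi$ (from $T$-invariance of $\sigma$ plus vanishing on the central orbit) is exactly what makes $\psi_t^*\mu_t^\xi$ constant. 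The one place where you are slightly too quick is the claim in Step~2 that $\omega_0=\omega_Y$ along the central orbit: this holds only if the tubular-neighbourhood chart of Step~1 is built so that its derivative at $x$ is precisely the symplectic isomorphism $(\t/\h)\oplus\h^0\oplus V\to T_xM$ you constructed -- in particular the complement to $T_x\calO$ inside $V^{\omega_x}$ must be chosen $H$-invariant \emph{and} Lagrangian, paired with $T_x\calO$ by $\omega_x$ so as to realize the cotangent pairing, and the slice-theorem embedding of $N_x$ must be taken to be this complement rather than, say, the metric orthogonal complement. These are standard adjustments (average a compatible complex structure to get the invariant Lagrangian complement; compose the exponential chart with a linear change of splitting), so they do not constitute a gap, but they are where the ``real work'' you flag actually sits, alongside the relative Poincar\'e lemma, which your homotopy-operator-plus-averaging construction does handle correctly.
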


In Theorem~\ref{thm:lnf}, 
the $H$ that appears 
in the Hamiltonian $T$ model that corresponds to the orbit of a point $p$
is the stabilizer of $p$.
Moreover, the weights $\eta_j$ that appear in the model
are unique up to permutation;
we call them the \textbf{isotropy weights} at $p$.


\end{document}